\newtheorem{theorem}{Theorem}
\theoremstyle{definition}
\newtheorem{lemma}{Lemma}
\newtheorem{proposition}[theorem]{Proposition}
\newtheorem{remark}[theorem]{Remark}
\begin{document}

\title{On an invariant for colored classical and singular links}

\author{Audrey Baumheckel}
\address{Department of Mathematics, California State University, Fresno \linebreak 5245 North Backer Avenue, M/S PB 108, CA 93740, USA}
\email{abaumheckel@mail.fresnostate.edu}

\author{Carmen Caprau}
\address{Department of Mathematics, California State University, Fresno\linebreak 5245 North Backer Avenue, M/S PB 108, CA 93740, USA}
\email{ccaprau@csufresno.edu}

\author{Conor Righetti}
\address{Department of Mathematics, California State University, Fresno \linebreak 5245 North Backer Avenue, M/S PB 108, CA 93740, USA}
\email{conorrighetti@mail.fresnostate.edu}

\subjclass[2020]{57K12, 57K14}
\keywords{links, singular links, polynomial invariants for links}
\thanks{CC was partially supported by Simons Foundation grant $\#355640$ and NSF grant DMS-$2204386$}

\begin{abstract}
A colored link, as defined by Francesca Aicardi, is an oriented classical link together with a ‘coloration’, which is a function defined on the set of link components and whose image is a finite set of ‘colors’.  An oriented classical link can be regarded as a colored link with its components colored with a sole color. Aicardi constructed an invariant $F(L)$ of colored links $L$ defined via skein relations.
When the components of a colored link are colored with the same color or when the colored link is a knot, $F(L)$ is a specialization of the HOMFLY-PT polynomial. Aicardi also showed that $F(L)$ is a stronger invariant than the HOMFLY-PT polynomial when evaluated on colored links whose components have different colors.
In this paper, we provide a state-sum model for the invariant $F(L)$ of colored links using a graphical calculus for oriented, colored, 4-valent planar graphs. We also extend $F(L)$ to an invariant of oriented colored singular links.
\end{abstract}

\maketitle


\section{Introduction} \label{intro}

Francesca Aicardi~\cite{A2015, A2016} introduced a special type of oriented  links that she called \textit{colored links}. If a function $\gamma$ is defined on the set $C$ of components of an oriented link $L$ and assigns elements of a finite set $N$ of `colors', then the link $L$ is called \textit{colored} and the function $\gamma$ is called a \textit{coloration}. If $|\gamma(C)| = 1$, then the colored link is a classical link; in this case, all components of $L$ are colored with the same color. A coloration $\gamma$ partitions the set $C$ of components of the oriented link $L$, hence $\gamma$ introduces an equivalence relation on $C$. Two colorations $\gamma$ and $\gamma'$ of $L$ are called \textit{equivalent} if there exists a bijection $\gamma (C) \to \gamma'(C)$. 

An invariant of colored links is an invariant of links that takes the same value on ambient isotopic links with equivalent colorations. 

Colored links are closely related to tied links. A \textit{tied link}~\cite{AJ2016} is an oriented classical link whose components may be connected by ties. A \textit{tie} is an arc that connects two points on the link, where the two points can belong to different components or the same component of the link. A tie connecting points that belong to the same component of a link can be removed, and such removal does not change the type of tied link. Ties are merely notational devices depicted as springs and are not embedded arcs. The set of ties on a tied link partitions the set $C$ of components of a link into equivalence classes. Hence tied links are links whose components are partitioned into classes. Equivalently, one can use colors to indicate the components of the link that are in the same equivalence class. In this sense, tied links are equivalent to colored links. In this paper, we chose to work with colored links for the sake of having less crowded diagrams.

Another type of knotted objects that we work with in this paper are oriented singular links. A \textit{singular link} is an immersion in $\mathbb{R}^3$ of a disjoint union of circles, such that the immersion contains finitely many singularities which are transverse double points.  A diagram of a singular link is a projection of the singular link in a plane; such a diagram may contain classical crossings, as in classical link diagrams, and singular crossings. A singular crossing in a diagram is depicted as a 4-valent vertex.

It is well known that two singular link diagrams represent \textit{equivalent} (or \textit{ambient isotopic}) singular links if and only if there exists a finite sequence of the extended Reidemeister moves mapping one diagram onto the other (see for example~\cite{Ka1989, Ka2001}). A proof of this statement can be found in~\cite{Caprau2016}. The \textit{extended Reidemeister moves}, depicted in Fig.~\ref{ExtendedReidMoves}, are local moves on singular link diagrams and include the classical Reidemeister moves $R1, R2$ and $R3$, together with the additional $R4$ and $R5$ moves involving singular crossings.
\begin{figure}[ht]  
\[\raisebox{-10pt}{\includegraphics[height=0.38in]{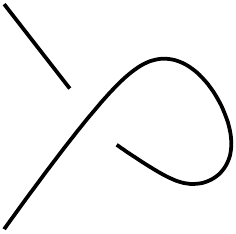}}\, \stackrel{R1}{\longleftrightarrow} \, \raisebox{-10pt}{\includegraphics[height=0.38in]{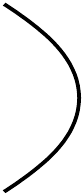}} \,\stackrel{R1}{\longleftrightarrow} \,  \reflectbox{\raisebox{17pt}{\includegraphics[height=0.38in, angle = 180]{poskink}}} \,\qquad\,
 \raisebox{-12pt}{\includegraphics[height=0.38in]{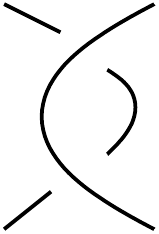}} \, \stackrel{R2}{\longleftrightarrow} \, \raisebox{-12pt}{\includegraphics[height=0.38in]{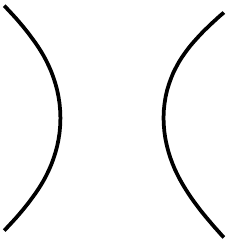}} \,\qquad \,
\raisebox{-12pt}{\includegraphics[height=0.38in]{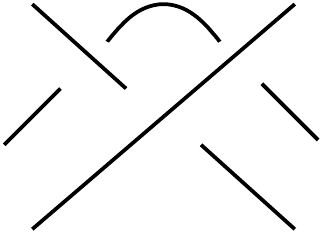}} \, \stackrel{R3}{\longleftrightarrow} \,   \raisebox{-12pt}{\includegraphics[height=0.38in]{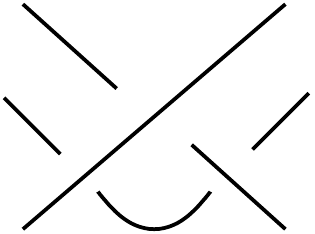}}\]

\[  \raisebox{-12pt}{\includegraphics[height=0.4in]{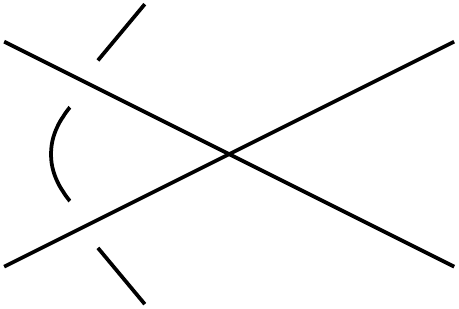}}\,\, \stackrel{R4}{\longleftrightarrow} \,\, \raisebox{-12pt}{\includegraphics[height=0.4in]{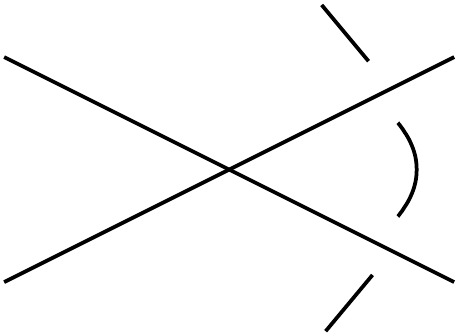}} \,\, \qquad \,\,  \raisebox{-12pt}{\includegraphics[height=0.4in]{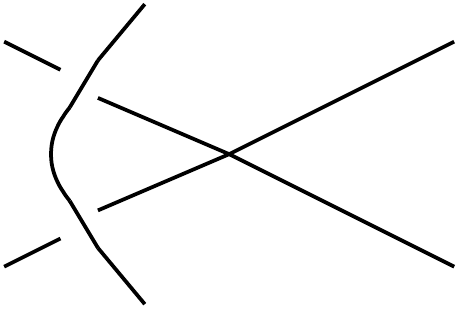}} \,\, \stackrel{R4}{\longleftrightarrow}\,\,  \raisebox{-12pt}{\includegraphics[height=0.4in]{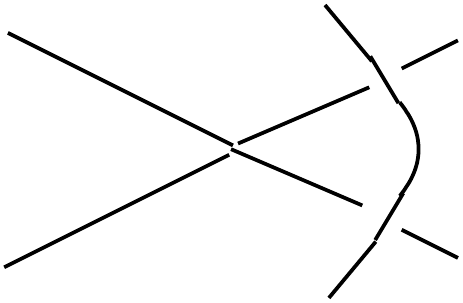}} \]\\
\[ \raisebox{-10pt}{\includegraphics[height=0.35in]{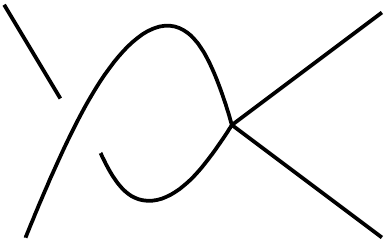}}\,\,\stackrel{R5}{ \longleftrightarrow} \,\,\reflectbox{\raisebox{-10pt}{\includegraphics[height=0.35in]{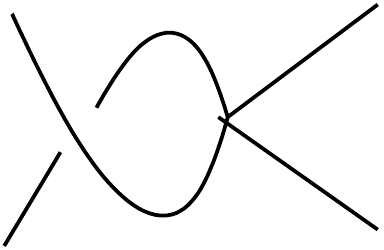}}}\]
\caption{Extended Reidemeister moves} \label{ExtendedReidMoves}
\end{figure}

We remark that singular links can also be regarded as rigid-vertex embeddings in $\mathbb{R}^3$  of 4-valent graphs. For details, we refer the reader to Kauffman's work in~\cite{Ka1989}.

  The concept of coloring can be extended to oriented singular links as well. We define the notion of \textit{colored singular link} as an oriented singular link together with a coloration defined on the components of the singular link. An invariant for colored singular links must take the same value on ambient isotopic singular links with equivalent colorations.  The diagram on the left in Fig.~\ref{ExColoredLinks} is an example of a colored (classical) link with three components, while the diagram on the right is an example of a colored singular link with two components.
\begin{figure}[ht] 
 \includegraphics[height=1in]{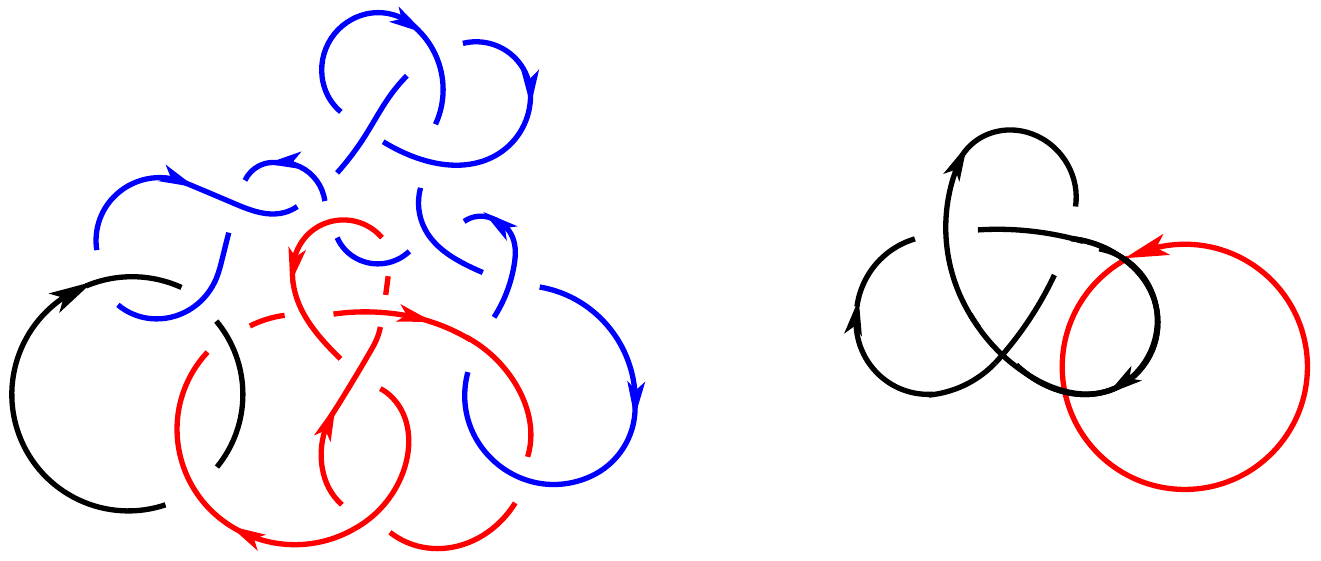}
 \caption{Examples of colored classical and singular links} \label{ExColoredLinks}
\end{figure}

One way to consider links as colored links or singular links as colored singular links is by coloring all of the link components with the same color. Another way is by coloring all of the link components with different colors.

Aicardi~\cite{A2016} defined an invariant $F(L)$ for colored links $L$. We note that a similar invariant for tied links was defined by Aicardi and Juyumaya in~\cite{AJ2016}. When the link components are colored with a sole color or when the link is a knot, the invariant $F$ is a specialization of the HOMFLY-PT polynomial~\cite{HOMFLY, PT} for oriented classical links. The reason for studying the polynomial $F$ is that it is a stronger invariant than the HOMFLY-PT polynomial when $F$ is evaluated on colored links whose components have different colors; for details, we refer to Aicardi's work in~\cite{A2015}. 

Given a colored link $L$, the invariant $F(L)$ is valued in $\mathbb{Q}(x, t, w)$ and is uniquely determined by the following three conditions:

 (I) $F \left( \raisebox{-.04 in}{\includegraphics[scale=0.2]{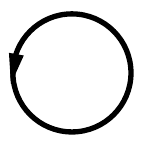}} \right) = 1$.

 (II) Let $D \cup \raisebox{-0.04 in}{\includegraphics[scale=0.2]{unknot.pdf}}$ be the disjoint union of an unknotted circle with a colored link diagram $D$, where the colors of $D$'s components are all distinct from the color of the circle.  Then 
\[ F \left( D \cup \raisebox{-0.04 in}{\includegraphics[scale=0.2]{unknot.pdf}}\right) = \frac{1}{wx} F \left( D \right).\]
(III) The following skein relation holds,
\[ \frac{1}{w} F\left( \reflectbox{\raisebox{-.1 in}{\includegraphics[scale=0.3]{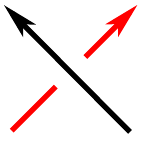}}} \right) - w F\left( \raisebox{-.1 in}{\includegraphics[scale=0.3]{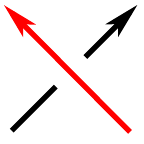}} \right) = \left( 1- \frac{1}{t} \right) F\left( \raisebox{-.1 in}{\includegraphics[scale=0.3]{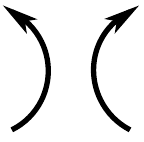}} \right) + \left( \frac{1}{w} - \frac{1}{tw} \right) F\left(\reflectbox{\raisebox{-.1 in}{\includegraphics[scale=0.3]{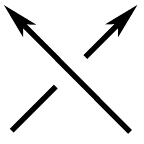}}} \right),\]
where when forgetting the colors, the four diagrams shown in the skein relation above are parts of link diagrams that are identical almost everywhere, except in a small neighborhood where they differ as shown. The diagrams \reflectbox{\raisebox{-.1 in}{\includegraphics[scale=0.3]{poscrosscolor_reflect.pdf}}} and \raisebox{-.1 in}{\includegraphics[scale=0.3]{neg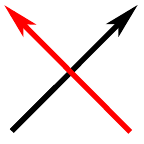}} are parts of colored links, where black and red colors indicate any colors, not necessarily distinct. Moreover, the components of the colored links that contain the parts \raisebox{-.1 in}{\includegraphics[scale=0.3]{un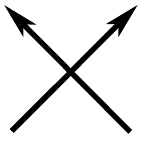}}, \reflectbox{\raisebox{-.1 in}{\includegraphics[scale=0.3]{negcross.pdf}}}, and \raisebox{-.1 in}{\includegraphics[scale=0.3]{negcross.pdf}} are colored with a sole color.

The skein relation (III) holds for any two colors of the strands in the two diagrams on the left of the relation. If the two colors are the same, then relation (III) is reduced to the following skein relation:
\[ (\text{IV}) \hspace{1cm} \frac{1}{tw} F\left( \reflectbox{\raisebox{-.1 in}{\includegraphics[scale=0.3]{negcross.pdf}}} \right) - w F\left( \raisebox{-.1 in}{\includegraphics[scale=0.3]{negcross.pdf}} \right) = \left( 1- \frac{1}{t} \right) F\left( \raisebox{-.1 in}{\includegraphics[scale=0.3]{uncross.pdf}} \right).\]

We refer to the invariant $F$ as the \textit{Aicardi-Juyumaya invariant} for colored links. 

Solving for  $\frac{1}{w} F\left( \raisebox{-.1 in}{\reflectbox{\includegraphics[scale=.3]{negcross.pdf}}} \right)$ in relation (IV) and substituting it into relation (III), yields the following skein relation:
\[
(*)  \,\,  \frac{1}{w} F\left( \raisebox{-.1 in}{\reflectbox{\includegraphics[scale=.3]{poscrosscolor_reflect.pdf}}}\right)-w F\left( \raisebox{-.1 in}{\includegraphics[scale=.3]{negcrosscolor.pdf}}\right) = \left(t-\frac{1}{t} \right) F\left( \raisebox{-.1 in}{\includegraphics[scale=.3]{uncross.pdf}} \right)+ tw F\left( \raisebox{-.1 in}{\includegraphics[scale=.3]{negcross.pdf}} \right) -\frac{1}{tw}F \left( \raisebox{-.1 in}{\reflectbox{\includegraphics[scale=.3]{negcross.pdf}}}\right). 
\]

The HOMFLY-PT polynomial $P$ of a classical (single-colored) link is valued in $\mathbb{Z}[\ell, \ell^{-1}, m]$ and is uniquely defined by the condition $P \left( \raisebox{-4pt}{\includegraphics[scale=0.2]{unknot.pdf}} \right) = 1$ and the skein relation below (see~\cite{A2016}):
 \[
 \ell \,P \left( \raisebox{-.1 in}{\reflectbox{\includegraphics[scale=.3]{negcross.pdf}}}\right) + \ell^{-1}\, P \left( \raisebox{-.1 in}{\includegraphics[scale=.3]{negcross.pdf}} \right)
+   m \, P\left( \raisebox{-.1 in}{\includegraphics[scale=.3]{uncross.pdf}} \right) =0.\]
If $L$ is a knot or a link whose components are colored with the same color, then $F(L)$ is determined by relations (I) and (IV), and it coincides with the polynomial $P(L)$ after the following substitutions:
\[\ell = \frac{i}{w\sqrt{t}} \quad \text{and} \quad m = i\left(\frac{1}{\sqrt{t}}  - \sqrt{t}\right). \]

The scope of this paper is two-fold. We first extend the invariant $F$ to oriented colored singular links and denote the resulting invariant of colored singular links by $[\, \cdot \,]$. We also prove a set of graphical skein relations involving oriented, colored, planar 4-valent graphs;  this is done in Section~\ref{ExtensionSingLinks}. This set of graphical relations provide a recursive way to evaluate oriented, colored, planar graphs with 4-valent vertices. In Section~\ref{state-sum model}, we explain how our graphical skein relations provide a state-sum model for the Aicardi-Juyumaya invariant $F$ for colored links, where the states associated to a diagram of a colored link are oriented, colored, $4$-valent planar graphs. This also allows us to define the invariant $[\, \cdot \,]$ for colored singular link without relying on the existence of the invariant $F$ for colored links.


\section{An extension of the Aicardi-Juyumaya invariant to colored singular links} \label{ExtensionSingLinks}

In this section we extend the Aicardi-Juyumaya invariant to oriented colored singular links. We denote the resulting invariant by $[ \, \cdot \,]$. We impose first that 
$[ D] = F(D)$, whenever $D$ is a colored (classical) link diagram. Next, we impose the following skein relations for $[ \, \cdot \,]$:
\begin{align} \label{sing1}
    \left[ \raisebox{-.11 in}{\includegraphics[scale=.3]{crosscolor.pdf}}\right]=& \frac{1}{w}\left[ \raisebox{-.11 in}{\reflectbox{\includegraphics[scale=.3]{poscrosscolor_reflect.pdf}}}\right] + \frac{1}{t}\left[ \raisebox{-.11 in}{\includegraphics[scale=.3]{uncross.pdf}}\right] + \frac{1}{tw} \left[ \raisebox{-.11 in}{\reflectbox{\includegraphics[scale=.3]{negcross.pdf}}} \right]  
    \end{align}
    \begin{align}\label{sing2}
    \left[ \raisebox{-.11 in}{\includegraphics[scale=.3]{crosscolor.pdf}} \right]=& w \left[ \raisebox{-.1in}{{\includegraphics[scale=.3]{negcrosscolor.pdf}}}\right] + t \left[ \raisebox{-.1 in}{\includegraphics[scale=.3]{uncross.pdf}}\right] + tw \left[ \raisebox{-.1 in}{\includegraphics[scale=.3]{negcross.pdf}}\right].
\end{align}
That is, if $\tilde{D}$ is a diagram of a colored singular link, we can use either the skein relation~\eqref{sing1} or relation~\eqref{sing2} at each of the singular crossings in $\tilde{D}$, to write $[\tilde{D}]$ as a $\mathbb{Q}(x, w, t)$-linear combination of evaluations of colored links. We note that the each diagram in relations~\eqref{sing1} and~\eqref{sing2} is a part of a larger diagram of a colored singular link whose components have colorations that are globally compatible; in particular, the colorations of the strands shown in a diagram determine the colorations of the link components that the strands are part of. When forgetting the colorations, the diagrams in the two sides of each of these skein relations are parts of larger diagrams that are identical outside of the small neighborhood where the skein relation is applied.

Subtracting relation~\eqref{sing2} from relation~\eqref{sing1}, yields the skein relation
\begin{align} \label{posminusnegcolor}
    \frac{1}{w} \left[ \raisebox{-.1 in}{\reflectbox{\includegraphics[scale=.3]{poscrosscolor_reflect.pdf}}}\right]-w \left[ \raisebox{-.1 in}{\includegraphics[scale=.3]{negcrosscolor.pdf}}\right] = \left(t-\frac{1}{t} \right)\left[ \raisebox{-.1 in}{\includegraphics[scale=.3]{uncross.pdf}} \right]+ tw \left[ \raisebox{-.1 in}{\includegraphics[scale=.3]{negcross.pdf}} \right] -\frac{1}{tw} \left[ \raisebox{-.1 in}{\reflectbox{\includegraphics[scale=.3]{negcross.pdf}}}\right], 
\end{align}
which we know that $[ \, \cdot \,]$ satisfies, since the invariant $F$ satisfies an equivalent skein relation, as shown in Section~\ref{intro}, and $F(D) = [D]$, for any diagram $D$ of a colored link. Hence, the evaluation $[\tilde{D}]$ of a colored singular link diagram is independent on whether we use the skein relation~\eqref{sing1} or~\eqref{sing2}.

The skein relations~\eqref{sing1} or~\eqref{sing2} have the following equivalent forms, which will be used later in this section.
\begin{align} 
    \frac{1}{w}\left[ \raisebox{-.11 in}{\reflectbox{\includegraphics[scale=.3]{poscrosscolor_reflect.pdf}}}\right]=& - \frac{1}{t}\left[ \raisebox{-.11 in}{\includegraphics[scale=.3]{uncross.pdf}}\right] - \frac{1}{tw} \left[ \raisebox{-.11 in}{\reflectbox{\includegraphics[scale=.3]{negcross.pdf}}} \right] + \left[ \raisebox{-.11 in}{\includegraphics[scale=.3]{crosscolor.pdf}}\right] \label{poscolor} \\
    w \left[ {\raisebox{-.1 in}{\includegraphics[scale=.3]{negcrosscolor.pdf}}}\right] =& -t \left[ \raisebox{-.1 in}{\includegraphics[scale=.3]{uncross.pdf}}\right] - tw \left[ \raisebox{-.1 in}{\includegraphics[scale=.3]{negcross.pdf}}\right] + \left[ \raisebox{-.11 in}{\includegraphics[scale=.3]{crosscolor.pdf}} \right]\label{negcolor}
\end{align}

\begin{theorem}
If $\tilde{D}_1$ and $\tilde{D}_2$ are diagrams representing ambient isotopic colored singular links, then $[\tilde{D}_1] = [\tilde{D}_2]$. That is, $[ \, \cdot \,]$ is an invariant for colored singular links.
\end{theorem}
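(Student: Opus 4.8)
The plan is to deduce invariance of $[\,\cdot\,]$ under each of the five extended Reidemeister moves of Fig.~\ref{ExtendedReidMoves} from the known properties of $F$: its invariance under $R1$, $R2$, $R3$, the skein relations (III), (IV), (*) of Section~\ref{intro}, and the disjoint-circle condition (II). First, though, one checks that $[\,\cdot\,]$ is genuinely well defined on diagrams. Given a colored singular link diagram $\tilde D$, resolving all of its singular crossings by repeated use of \eqref{sing1} and/or \eqref{sing2} produces an element $\sum_i c_i\,F(D_i)\in\mathbb{Q}(x,w,t)$, where the $D_i$ are colored classical link diagrams (with colorations induced from $\tilde D$ by the compatibility convention) and $c_i\in\mathbb{Q}(x,w,t)$. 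Resolutions at distinct singular crossings occur in disjoint disks and so commute, while at a single crossing \eqref{sing1} and \eqref{sing2} give the same output because their difference is exactly \eqref{posminusnegcolor}, a relation that $F$ satisfies (it is relation (*) of Section~\ref{intro}). Hence $[\tilde D]$ depends only on $\tilde D$, and since $F$ depends only on the partition of components into color classes, so does $[\,\cdot\,]$.

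For the classical moves $R1$, $R2$, $R3$: suppose $\tilde D_1$ and $\tilde D_2$ differ inside a disk $B$ containing no singular crossing. Resolving every singular crossing (all outside $B$) in the same way on both sides yields $\sum_i c_i\,F(D_i^{(1)})$ and $\sum_i c_i\,F(D_i^{(2)})$, where each $D_i^{(1)}$ and $D_i^{(2)}$ differ by the same classical Reidemeister move performed in $B$ and carry identical colorations. Since $F$ is an invariant of colored links, $F(D_i^{(1)})=F(D_i^{(2)})$ for every $i$, so $[\tilde D_1]=[\tilde D_2]$.

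The moves $R4$ and $R5$ are the substantive part. In each of them the singular crossing $v$ occurring in the move is the same on both sides, so I would resolve $v$ (only) via \eqref{sing1}, keeping the remainder of each diagram as fixed context; this expresses both sides as the same $\mathbb{Q}(x,w,t)$-combination of three terms in which $v$ has been replaced, respectively, by a positive crossing, an oriented smoothing, or a negative crossing. In the two crossing terms an $R4$ move turns into a strand sliding entirely over (respectively, entirely under) an honest crossing, i.e.\ the classical detour move, which is a composition of $R1$, $R2$, $R3$; and an $R5$ move turns into an instance of the $R3$ move, possibly preceded by auxiliary $R1$, $R2$ moves. In the oriented-smoothing term the local picture degenerates and the two sides are related by planar isotopy together with at most a couple of $R2$ moves; if such a term produces a curl or a split unknotted circle, it is removed using $R1$ or condition (II), respectively, with the same effect on either side. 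Applying invariance of $F$ under $R1$--$R3$ term by term gives $[\tilde D_1]=[\tilde D_2]$. Since the extended Reidemeister moves generate ambient isotopy of colored singular links and never change the color-class partition, the theorem follows.

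The main obstacle is this last step: one must enumerate the possible local configurations at $v$ for each of the two forms of $R4$ and for $R5$, and verify in every case that, after applying \eqref{sing1}, the resulting identity among colored classical link diagrams is indeed a consequence of the classical Reidemeister moves, all the while tracking strand orientations and the induced colorations, and separately dispose of the degenerate oriented-smoothing terms and any stray circles using (II) and $R1$. This bookkeeping is routine, but it is where the actual verification lives.
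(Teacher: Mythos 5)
Your proposal is correct and follows essentially the same route as the paper: resolve the singular crossing occurring in an $R4$ or $R5$ move via \eqref{sing1} (the paper uses \eqref{sing2} for $R5$, which is equivalent by \eqref{posminusnegcolor}), observe that the two sides of the move then agree term by term up to classical Reidemeister moves and planar isotopy, and invoke the invariance of $F$; your preliminary well-definedness check is likewise carried out (for the choice between \eqref{sing1} and \eqref{sing2}) in the paper just before the theorem. One minor correction: the resolved terms of an $R5$ move involve only two strands, so they are matched by planar isotopy and $R2$ rather than by $R3$, and the paper's $R4$ computation uses a single $R3$ per crossing term rather than a full detour move --- but neither difference affects the argument.
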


\begin{proof}
We first note that since $[\, \cdot \,]$ is an extension of $F,$ then $[\, \cdot \,]$ is invariant under Reidemeister moves $R1, R2$ and $R3$. It remains to show that $[\, \cdot \,]$ is also invariant under the moves $R4$ and $R5$. Starting with the Reidemeister-type move $R4$, we find that:
\begin{align*}
    \left[\raisebox{-.1 in}{\includegraphics[scale=.3]{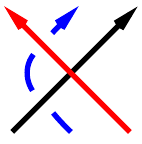}}\right]&\stackrel{\eqref{sing1}}{=}\frac{1}{w}\left[\raisebox{-.1 in}{\includegraphics[scale=.3]{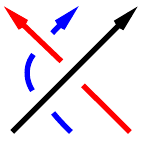}}\right]+\frac{1}{t}\left[\raisebox{-.1 in}{\includegraphics[scale=.3]{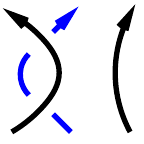}}\right]+\frac{1}{tw}\left[\raisebox{-.1 in}{\includegraphics[scale=.3]{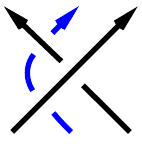}}\right]\\
    &\stackrel{R3}{=}\frac{1}{w}\left[\raisebox{-.1 in}{\includegraphics[scale=.3]{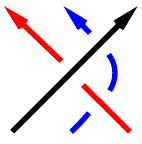}}\right]+\frac{1}{t}\left[\raisebox{-.1 in}{\includegraphics[scale=.3]{R43.pdf}}\right]+\frac{1}{tw}\left[\raisebox{-.1 in}{\includegraphics[scale=.3]{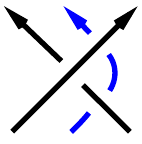}}\right]\\
    &\stackrel{R2}{=}\frac{1}{w}\left[\raisebox{-.1 in}{\includegraphics[scale=.3]{R45.pdf}}\right]+\frac{1}{t}\left[\raisebox{-.1 in}{\includegraphics[scale=.3]{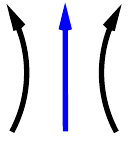}}\right]+\frac{1}{tw}\left[\raisebox{-.1 in}{\includegraphics[scale=.3]{R46.pdf}}\right]\\
    &\stackrel{R2}{=}\frac{1}{w}\left[\raisebox{-.1 in}{\includegraphics[scale=.3]{R45.pdf}}\right]+\frac{1}{t}\left[\raisebox{-.1 in}{\includegraphics[scale=.3]{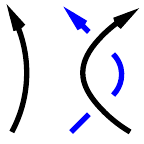}}\right]+\frac{1}{tw}\left[\raisebox{-.1 in}{\includegraphics[scale=.3]{R46.pdf}}\right]
 \stackrel{\eqref{sing1}}{=}\left[\raisebox{-.1 in}{\includegraphics[scale=.3]{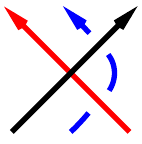}}\right].
\end{align*}
Hence, $\left[\raisebox{-.1 in}{\includegraphics[scale=.3]{R41.pdf}}\right] =\left[\raisebox{-.1 in}{\includegraphics[scale=.3]{R49.pdf}}\right]$. A similar process is used to prove that $\left[\raisebox{-.1 in}{\includegraphics[scale=.3]{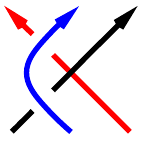}}\right]=\left[\raisebox{-.1 in}{\includegraphics[scale=.3]{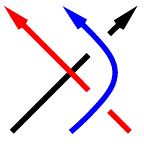}}\right]$ and that $[\, \cdot \,]$ is invariant under the other oriented versions of the move $R4$. We prove next the invariance of $[\, \cdot \,]$ under the Reidemeister-type move $R5$:
\begin{align*}
    \left[\raisebox{-.1 in}{\includegraphics[scale=.3]{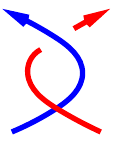}}\right]\stackrel{\eqref{sing2}}{=}w\left[\raisebox{-.1 in}{\includegraphics[scale=.3]{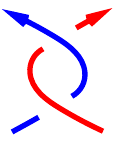}}\right]+t\left[\raisebox{-.1 in}{\includegraphics[scale=.3]{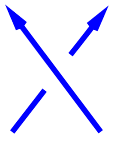}}\right]+tw\left[\raisebox{-.1 in}{\includegraphics[scale=.3]{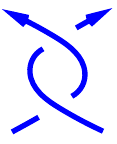}}\right]
    \stackrel{\eqref{sing2}}{=}\left[ \,\raisebox{-.1 in}{\includegraphics[scale=0.3]{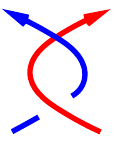}} \, \right ].
\end{align*}
In the first equality above, we applied the skein relation~\eqref{sing2} in a small neighborhood of the 4-valent vertex of the diagram on the left-hand side of the move $R5$. In the second step we applied the same skein relation, this time in reverse order, in a small neighborhood near the top part of the three diagrams obtained in the previous step. The proof of invariance under the other oriented versions of the move $R5$ are done in a similar manner.
This completes the proof that $[\, \cdot \,]$ is an invariant for colored singular links.
\end{proof}

\begin{proposition}
Let $L \, \tilde \cup \, \reflectbox{\raisebox{-.04in}{\includegraphics[scale=.2]{unknot.pdf}}}$ be the disjoint union between a colored link L with the standard diagram of the unknot, where the unknot is colored the same as (at least one) of the components of $L$. Then
\begin{eqnarray}
\left[L \, \tilde \cup \, \reflectbox{\raisebox{-.04in}{\includegraphics[scale=.2]{unknot.pdf}}}\right]  = \frac{tw^2-1}{w(1-t)}\left[L\right] 
    = \left (\frac{tw}{1-t} + \frac{t^{-1}w^{-1}}{1-t^{-1}} \right )\left[L\right]. \label{eq:tied_union}
\end{eqnarray}
\end{proposition}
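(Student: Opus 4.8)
The plan is to adapt to $[\,\cdot\,]$ the standard computation of the value of a HOMFLY--PT-type invariant on a split union with an unknot, the one subtlety being that the extra circle must be produced by pinching off a \emph{kink} placed on a component of $L$ of the \emph{same} colour, so that the single-colour skein relation (IV) applies. Throughout, write $\bigcirc$ for the extra unknotted circle, and recall that, since $[D]=F(D)$ for every colored classical link diagram $D$, the evaluation $[\,\cdot\,]$ is invariant under $R1$ and satisfies relation (IV) at any crossing whose two strands are colored alike (in particular, at any self-crossing of a single component).

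By hypothesis $L$ has a component $K$ whose colour equals the colour of $\bigcirc$. First I would insert a kink on $K$ by an $R1$ move; this leaves $[L]$ unchanged. I would then apply relation (IV) at the crossing $c$ of this kink. Switching $c$ produces, in the two cases, the diagram of $L$ with a positive, respectively negative, kink on $K$, each of which equals $[L]$ after an $R1$ move; the orientation-preserving resolution at $c$ pinches a small disjoint trivial circle off of $K$, and that circle carries the colour of $K$, so this third term is precisely $\bigl[L\,\tilde\cup\,\bigcirc\bigr]$. Feeding these three identifications into (IV) gives
\[
\frac{1}{tw}\,[L]-w\,[L]=\Bigl(1-\frac1t\Bigr)\bigl[L\,\tilde\cup\,\bigcirc\bigr],
\]
whence $\bigl[L\,\tilde\cup\,\bigcirc\bigr]=\frac{\frac{1}{tw}-w}{1-\frac1t}\,[L]=\frac{tw^2-1}{w(1-t)}\,[L]$ after clearing denominators; the second displayed form in the statement is then the elementary identity $\frac{tw}{1-t}+\frac{t^{-1}w^{-1}}{1-t^{-1}}=\frac{tw^2-1}{w(1-t)}$.

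I do not expect a genuine obstacle here. The two points that deserve a sentence of care are: (i) that the orientation-preserving resolution of a kink crossing is the split union of (the kink-free diagram of) $L$ with a trivial circle — rather than simply the kink-free diagram of $L$ — so that it really does contribute the term $\bigl[L\,\tilde\cup\,\bigcirc\bigr]$; and (ii) that $c$ is a self-crossing of the single component $K$, so that the applicable skein relation is (IV) and not the genuinely two-coloured relation \eqref{posminusnegcolor} (indeed \eqref{posminusnegcolor} collapses to (IV) when both strand colours coincide). Everything else is routine bookkeeping.
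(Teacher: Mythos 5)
Your proof is correct and follows essentially the same route as the paper: both apply the single-colour relation (IV) to a kink crossing on a component of $L$ sharing the colour of the extra circle, identify the two crossing terms with $[L]$ via $R1$ and the oriented smoothing with $\bigl[L\,\tilde\cup\,\bigcirc\bigr]$, and solve for the coefficient $\frac{tw^2-1}{w(1-t)}$. The only difference is cosmetic: the paper starts from the split-union diagram and reads (IV) in the other direction.
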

\begin{proof}
 We represent  $L \, \tilde \cup \, \reflectbox{\raisebox{-.04in}{\includegraphics[scale=.2]{unknot.pdf}}}$ as $\raisebox{-.1 in}{\includegraphics[scale=.3]{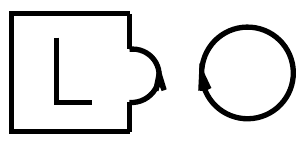}}.$
From relation (IV), we know that 
\[\left(1-t^{-1}\right) \left[\raisebox{-.08 in}{\includegraphics[scale=.25]{uncross.pdf}}\right] = \frac{1}{tw} \left[\reflectbox{\raisebox{-.08 in}{\includegraphics[scale=.25]{negcross.pdf}}}\right] - w \left[\raisebox{-.08 in}{\includegraphics[scale=.25]{negcross.pdf}}\right].\]
Hence,
\begin{align*}
    \left(1-t^{-1}\right) \left[\raisebox{-.1 in}{\includegraphics[scale=.3]{disjunion1.pdf}}\right] &= \frac{1}{tw} \left[\raisebox{-.1 in}{\includegraphics[scale=.3]{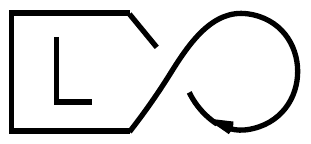}}\right] - w \left[\raisebox{-.1 in}{\includegraphics[scale=.3]{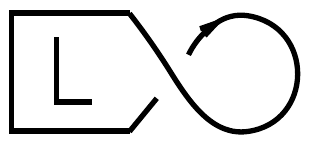}}\right] \\
    \left(1-t^{-1}\right) \left[L \, \tilde \cup \, \reflectbox{\raisebox{-.04in}{\includegraphics[scale=.2]{unknot.pdf}}}\right] &= \left(\frac{1}{tw}-w\right) \left[L\right] \\
    \left[L \, \tilde \cup\, \reflectbox{\raisebox{-.04in}{\includegraphics[scale=.2]{unknot.pdf}}}\right]  &= \frac{tw^2-1}{w(1-t)}\left[L\right] = \left (\frac{tw}{1-t} + \frac{t^{-1}w^{-1}}{1-t^{-1}} \right )\left[L\right].
\end{align*}
Therefore, identity~\eqref{eq:tied_union} holds.
\end{proof}

We make use of the skein relations~\eqref{poscolor} and~\eqref{negcolor} to prove the following graphical skein relations involving oriented, colored, 4-valent planar graphs. An oriented, colored, 4-valent planar graph can be regarded as a diagram of an oriented colored singular link without classical crossings.

\begin{theorem} \label{GraphCalculus}
The following graphical skein relations hold for $[\,\cdot \,]$:
\begin{align}
    \left[ \raisebox{-.1 in}{\includegraphics[scale=.3]{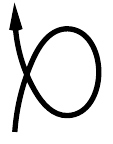}} \right]=& \left( \frac{w}{1-t} + \frac{w^{-1}}{1-t^{-1}}\right) \left[ \raisebox{-.1 in}{\includegraphics[scale=.3]{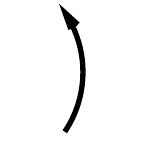}} \right] \label{singloop} \\
    \left[ \reflectbox{\raisebox{-.1 in}{\includegraphics[scale=.3]{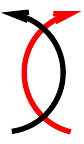}}} \right] =& \left[ \reflectbox{\raisebox{-.1 in}{\includegraphics[scale=.3]{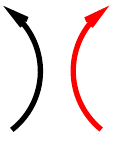}}} \right]+(t+t^{-1})\left[ \raisebox{-.1 in}{\includegraphics[scale=.3]{uncross.pdf}}\right]+(t+t^{-1}) \left[ \raisebox{-.1 in}{\includegraphics[scale=.3]{cross.pdf}}\right] \label{bigon} \\
    \left[ \reflectbox{\raisebox{-.1 in}{\includegraphics[scale=.3]{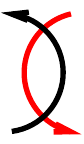}}} \right]=& \left[ \reflectbox{\raisebox{-.1 in}{\includegraphics[scale=.3]{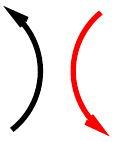}}} \right]+(t+t^{-1}+1) \left[ \reflectbox{\raisebox{-.1 in}{\includegraphics[scale=.3]{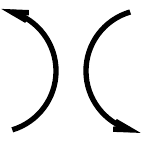}}}\right]+ \left( \frac{wt^{-1}}{1-t}+\frac{w^{-1}t}{1-t^{-1}}\right) \left[ \reflectbox{\raisebox{-.1 in}{\includegraphics[scale=.3]{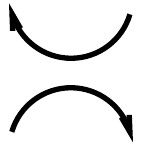}}} \right] \label{bigonop}\\
    \left[ \raisebox{-.1 in}{\includegraphics[scale=.3]{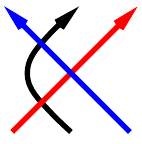}}\right] &- \left[ \reflectbox{\raisebox{-.1 in}{\includegraphics[scale=.3]{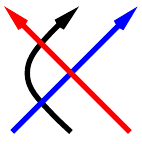}}}\right] = \left[ \raisebox{-.1 in}{\includegraphics[scale=.3]{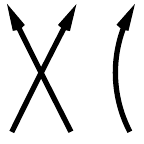}}\right] - \left[ \reflectbox{\raisebox{-.1 in}{\includegraphics[scale=.3]{part1R35.pdf}}}\right]  \label{R3cross}\\
    \left[ \raisebox{-.1 in}{\includegraphics[scale=.3]{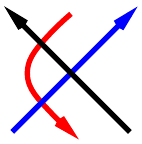}}\right] &- \left[ \reflectbox{\raisebox{-.1 in}{\includegraphics[scale=.3]{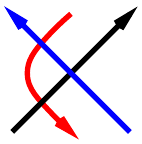}}}\right] = \left(\frac{wt^{-2}}{1-t}+\frac{w^{-1}t^2}{1-t^{-1}}\right)\left(\left[\raisebox{-.1in}{\includegraphics[scale=.3]{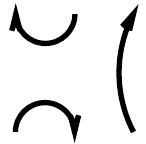}}\right] - \left[\reflectbox{\raisebox{-.1in}{\includegraphics[scale=.3]{part2R3cross9.pdf}}}\right]\right). \label{R3crossdown}
\end{align}
\end{theorem}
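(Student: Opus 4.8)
The plan is to verify the five identities by a uniform two-step procedure. \textbf{Step 1:} at every $4$-valent (singular) vertex occurring on either side of an identity, apply the defining skein relation \eqref{sing1} or \eqref{sing2} (or, when convenient, the rearranged forms \eqref{poscolor}, \eqref{negcolor}), so that both sides become $\mathbb{Q}(x,t,w)$-linear combinations of colored \emph{classical} link diagrams. \textbf{Step 2:} reduce those classical diagrams to a common normal form using only facts already in hand: the $R1$-, $R2$-, $R3$-invariance of $[\,\cdot\,]=F$; the split-unknot relations (II) (when the split unknot receives a color distinct from all others) and \eqref{eq:tied_union} (when it shares a color with some component); and the HOMFLY--PT-type skein relations (IV) and $(*)$. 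Because the preceding theorem already establishes that $[\,\cdot\,]$ is a well-defined invariant of colored singular links, each of \eqref{singloop}--\eqref{R3crossdown} is merely to be checked, so we may apply \eqref{sing1} or \eqref{sing2} at a vertex as we please, choose the order in which vertices are resolved, and simplify the two sides independently; in particular, \eqref{posminusnegcolor} guarantees that the outcome is insensitive to these choices.

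Relation \eqref{singloop} is the short case that sets the pattern. Applying \eqref{sing1} at its single vertex, the two terms in which the vertex becomes a classical crossing each contain a Reidemeister-$I$ curl and hence collapse to the straight strand on the right of \eqref{singloop}, while the oriented-smoothing term splits off a circle colored like the strand and therefore contributes, via \eqref{eq:tied_union}, a factor $\tfrac{tw}{1-t}+\tfrac{t^{-1}w^{-1}}{1-t^{-1}}$; combining the three terms with the coefficients $\tfrac1w,\tfrac1t,\tfrac1{tw}$ and simplifying in $\mathbb{Q}(x,t,w)$ gives exactly $\tfrac{w}{1-t}+\tfrac{w^{-1}}{1-t^{-1}}$.

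For the bigon relations \eqref{bigon} and \eqref{bigonop} one resolves both vertices of the bigon, producing $3\times 3$ classical diagrams; most collapse or cancel in pairs under $R2$, the one exception being the term in which the bigon is opened by oriented smoothings at both vertices, which releases a small free circle absorbed via \eqref{eq:tied_union} --- this accounts for the coefficient $\tfrac{wt^{-1}}{1-t}+\tfrac{w^{-1}t}{1-t^{-1}}$ in \eqref{bigonop}. Sorting the surviving diagrams according to whether the two outgoing strands end up smoothed apart, crossed, or still carrying distinct colors assembles the three model terms on the right-hand sides, and matching coefficients finishes the check; in the single-color summands one additionally uses (IV)/$(*)$ to normalize a leftover crossing. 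For the $R3$-type relations \eqref{R3cross} and \eqref{R3crossdown} the two sides of each identity are Reidemeister-$III$ triangles with one strand replaced by a singular vertex (plus the remaining classical crossing); after resolving that vertex, all but one of the resolved terms of the left diagram coincide --- by an ordinary $R3$ (or $R2$) move, once one accounts for the reflection built into the statement --- with the corresponding resolved terms of the right diagram, and so drop out of the difference. The surviving term for \eqref{R3cross} reduces, after an $R1$ move, directly to the antisymmetric combination on its right-hand side; the surviving term for \eqref{R3crossdown} carries a split circle (or a loop of the type treated in \eqref{singloop}), whose absorption, together with the monomial factors picked up when the accompanying crossings are simplified, yields the coefficient $\tfrac{wt^{-2}}{1-t}+\tfrac{w^{-1}t^2}{1-t^{-1}}$.

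The algebra over $\mathbb{Q}(x,t,w)$ is routine; the main obstacle is the diagrammatic bookkeeping. One must follow the orientation of every edge and the color of every component through each resolution, since whether an emergent circle is absorbed by (II) or by \eqref{eq:tied_union} depends on whether it shares a color with the rest of the diagram, and since the two-vertex relations \eqref{bigon}--\eqref{bigonop} and the $R3$-type relations \eqref{R3cross}--\eqref{R3crossdown} generate many terms whose pairwise cancellation must be confirmed diagram by diagram. One should also check that every Reidemeister move invoked is legal on colored diagrams --- it is, since the colors are simply transported along the isotopy --- and, for \eqref{R3cross} and \eqref{R3crossdown}, that the over/under and reflection conventions in the figures are matched consistently between the two sides. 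Finally, the remaining oriented variants of \eqref{bigon}--\eqref{bigonop} and of the $R3$-type relations follow from the displayed ones by the same argument combined with mirror symmetry.
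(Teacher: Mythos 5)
Your proposal follows essentially the same route as the paper: resolve the singular vertices via \eqref{sing1}/\eqref{sing2} (and their rearranged forms \eqref{poscolor}, \eqref{negcolor}), absorb the circles that appear via \eqref{eq:tied_union} (or \eqref{singloop}), and match the surviving terms using Reidemeister invariance --- your account of \eqref{singloop} in particular reproduces the paper's computation exactly. The only cosmetic differences are that the paper does not reduce both sides all the way to classical diagrams but instead converts leftover classical crossings back into singular ones via \eqref{poscolor}/\eqref{negcolor} so as to land on the stated right-hand sides, and that the term-by-term cancellations in \eqref{R3cross}--\eqref{R3crossdown} are effected there by the already-established $R4$-invariance of $[\,\cdot\,]$ rather than by classical $R2$/$R3$ moves on fully resolved diagrams.
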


\begin{proof}
We start by proving graphical relation \eqref{singloop}. Note that we start by using the skein relation~\eqref{sing1}, with both strands colored similarly. In the second step we use that $[\, \cdot \,]$ is invariant under the Reidemeister move $R1$.
\begin{align*}
    \left[ \raisebox{-.1 in}{\includegraphics[scale=.3]{loop.pdf}} \right] \stackrel{\eqref{sing1}}{=}& \left( \frac{1}{w} + \frac{1}{tw}\right) \left[ \raisebox{-.1 in}{\includegraphics[scale=.3]{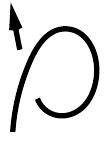}} \right] + \frac{1}{t} \left[ \raisebox{-.1 in}{\includegraphics[scale=.3]{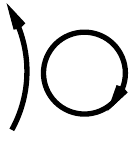}} \right] 
    \stackrel{R1}{=} \left( \frac{1}{w} + \frac{1}{tw}\right) \left[ \raisebox{-.1 in}{\includegraphics[scale=.3]{lineup.pdf}} \right] + \frac{1}{t} \left[ \raisebox{-.1 in}{\includegraphics[scale=.3]{linecirc.pdf}} \right]  \\
      \stackrel{\eqref{eq:tied_union}}{=}& \left( \frac{1}{w} + \frac{1}{tw}\right) \left[ \raisebox{-.1 in}{\includegraphics[scale=.3]{lineup.pdf}} \right] + \frac{1}{t} \left( \frac{tw^2-1}{(1-t)w}\left[ \raisebox{-.1 in}{\includegraphics[scale=.3]{lineup.pdf}} \right] \right) \\
     =& \left( \frac{w}{1-t} + \frac{w^{-1}}{1-t^{-1}}\right) \left[ \raisebox{-.1 in}{\includegraphics[scale=.3]{lineup.pdf}} \right].
\end{align*}

To prove the graphical relation~\eqref{bigon} we start from the diagram on the left side of the relation and apply the skein relation~\eqref{sing1} in a small neighborhood of the top vertex. In the second step, we apply the skein relation~\eqref{sing2} to the resulting first and third diagrams, as we show below.
\begin{align*}
    \left[ \reflectbox{\raisebox{-.1 in}{\includegraphics[scale=.3]{bigoncrosscolor.pdf}}} \right] \stackrel{\eqref{sing1}}{=}& \frac{1}{w} \left[ \reflectbox{\raisebox{-.1 in}{\includegraphics[scale=.3]{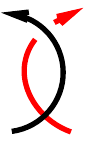}}} \right] + \frac{1}{t} \left[ \raisebox{-.1 in}{\includegraphics[scale=.3]{cross.pdf}} \right] + \frac{1}{tw} \left[ \reflectbox{\raisebox{-.1 in}{\includegraphics[scale=.3]{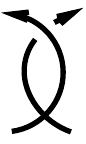}}} \right] \\
    \stackrel{\eqref{sing2}}{=}& \frac{1}{w} \left( w \left[ \reflectbox{\raisebox{-.1 in}{\includegraphics[scale=.3]{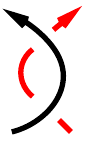}}} \right] + t \left[ \reflectbox{\raisebox{-.1 in}{\includegraphics[scale=.3]{negcross.pdf}}} \right] +tw \left[ \reflectbox{\raisebox{-.1 in}{\includegraphics[scale=.3]{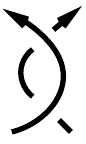}}} \right] \right) + \frac{1}{t} \left[ \raisebox{-.1 in}{\includegraphics[scale=.3]{cross.pdf}} \right] \\
    &\hspace{.2in} + \frac{1}{tw} \left( (w + tw) \left[ \reflectbox{\raisebox{-.1 in}{\includegraphics[scale=.3]{bigonup.pdf}}} \right] + t \left[ \reflectbox{\raisebox{-.1 in}{\includegraphics[scale=.3]{negcross.pdf}}} \right]\right). 
    \end{align*}
 In the next step, we use that $[\, \cdot \, ]$ is invariant under the Reidemeister move $R2$ and combine like terms. Then we apply the skein relation~\eqref{poscolor} to the second diagram with a positive crossing and both strands colored with the same color, and obtain the following calculations.
    \begin{align*}
   \left[ \reflectbox{\raisebox{-.1 in}{\includegraphics[scale=.3]{bigoncrosscolor.pdf}}} \right] \stackrel{R2}{=}& \left[ \reflectbox{\raisebox{-.1 in}{\includegraphics[scale=.3]{uncrosscolor.pdf}}} \right] + \frac{t+1}{w} \left[ \reflectbox{\raisebox{-.1 in}{\includegraphics[scale=.3]{negcross.pdf}}} \right] + \left(t + 1 + t^{-1} \right) \left[ \raisebox{-.1 in}{\includegraphics[scale=.3]{uncross.pdf}} \right] + \frac{1}{t} \left[ \raisebox{-.1 in}{\includegraphics[scale=.3]{cross.pdf}} \right] \\
    \stackrel{\eqref{poscolor}}{=}& \left[ \reflectbox{\raisebox{-.1 in}{\includegraphics[scale=.3]{uncrosscolor.pdf}}} \right]  - \left[ \raisebox{-.1 in}{\includegraphics[scale=.3]{uncross.pdf}} \right] + t \left[ \raisebox{-.1 in}{\includegraphics[scale=.3]{cross.pdf}} \right] + \left(t + 1 + t^{-1} \right) \left[ \raisebox{-.1 in}{\includegraphics[scale=.3]{uncross.pdf}} \right] + \frac{1}{t} \left[ \raisebox{-.1 in}{\includegraphics[scale=.3]{cross.pdf}} \right] \\
    =& \left[ \reflectbox{\raisebox{-.1 in}{\includegraphics[scale=.3]{uncrosscolor.pdf}}} \right] + \left( t + t^{-1} \right) \left[ \raisebox{-.1 in}{\includegraphics[scale=.3]{uncross.pdf}} \right] + \left( t + t^{-1} \right) \left[ \raisebox{-.1 in}{\includegraphics[scale=.3]{cross.pdf}} \right].
\end{align*}
Hence, the graphical relation~\eqref{bigon} holds. In the proof of the graphical relation~\eqref{bigonop}, we use a somewhat similar starting point. Note, however, that the strands in the left side diagram of the relation have different orientations than in the relation we just proved.
\begin{align*}
    \left[ \reflectbox{\raisebox{-.1 in}{\includegraphics[scale=.3]{4valbigoncolordown.pdf}}} \right] \stackrel{\eqref{sing1}}{=}& \frac{1}{w} \left[ \raisebox{-.1 in}{\includegraphics[scale=.3]{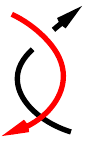}} \right] + \frac{1}{t} \left[ \reflectbox{\rotatebox{180}{\raisebox{-.18 in}{\includegraphics[scale=.3]{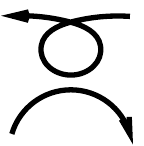}}}} \right] + \frac{1}{tw} \left[ \reflectbox{\rotatebox{180}{{\raisebox{-.18 in}{\includegraphics[scale=.3]{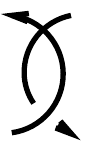}}}}} \right] \\
    \stackrel{\eqref{sing2}, \eqref{singloop}}{=}& \frac{1}{w} \left( w \left[ \raisebox{-.11 in}{\includegraphics[scale=.3]{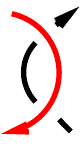}} \right] + t \left[ \reflectbox{\rotatebox{180}{\raisebox{-.18 in}{\includegraphics[scale=.3]{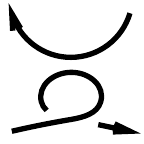}}}} \right] + tw \left[ \reflectbox{\rotatebox{180}{\raisebox{-.18 in}{\includegraphics[scale=.3]{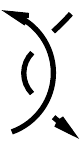}}}} \right] \right) + \frac{1}{t} \left( \frac{w^2-t}{w(1-t)} \left[ \reflectbox{\rotatebox{180}{\raisebox{-.18 in}{\includegraphics[scale=.3]{bigonuncrossdown.pdf}}}} \right] \right) \\
    &\hspace{.2in} + \frac{1}{tw} \left( (wt+w) \left[ \reflectbox{\raisebox{-.1 in}{\includegraphics[scale=.3]{bigondown.pdf}}} \right] + t \left[ \reflectbox{\rotatebox{180}{\raisebox{-.18 in}{\includegraphics[scale=.3]{bigondown1.pdf}}}} \right] \right).
    \end{align*}
   In the second step we also used that $\frac{w^2-t}{w(1-t)} = \frac{w}{1-t} + \frac{w^{-1}}{1-t^{-1}}$. Next we use that $[\, \cdot \, ]$ is invariant under the moves R1 and R2, to obtain the following:
    \begin{align*}
        \left[ \reflectbox{\raisebox{-.1 in}{\includegraphics[scale=.3]{4valbigoncolordown.pdf}}} \right]   \stackrel{R1, R2}{=}& \frac{1}{w} \left( w \left[ \reflectbox{\raisebox{-.1 in}{\includegraphics[scale=.3]{uncrosscolordown.pdf}}} \right] + t \left[ \reflectbox{\raisebox{-.1 in}{\includegraphics[scale=.3]{bigonuncrossdown.pdf}}} \right] + tw \left[ \reflectbox{\raisebox{-.1 in}{\includegraphics[scale=.3]{uncrossdown.pdf}}} \right] \right) + \frac{1}{t} \left( \frac{w^2-t}{w(1-t)} \left[ \reflectbox{\raisebox{-.1 in}{\includegraphics[scale=.3]{bigonuncrossdown.pdf}}} \right] \right) \\
    &\hspace{.2in} + \frac{1}{tw} \left( (wt+w) \left[ \reflectbox{\raisebox{-.1 in}{\includegraphics[scale=.3]{uncrossdown.pdf}}} \right] + t \left[ \reflectbox{\raisebox{-.1 in}{\includegraphics[scale=.3]{bigonuncrossdown.pdf}}} \right] \right) \\
    =& \left[ \reflectbox{\raisebox{-.1 in}{\includegraphics[scale=.3]{uncrosscolordown.pdf}}} \right]+(t+t^{-1}+1) \left[ \reflectbox{\raisebox{-.1 in}{\includegraphics[scale=.3]{uncrossdown.pdf}}}\right]+ \left( \frac{wt^{-1}}{1-t}+\frac{w^{-1}t}{1-t^{-1}}\right) \left[ \reflectbox{\raisebox{-.1 in}{\includegraphics[scale=.3]{bigonuncrossdown.pdf}}} \right].
\end{align*}

We will now evaluate the two terms of the left side of identity~\eqref{R3cross}. Starting with the first diagram, we resolve first the top left 4-valent vertex using the skein relation~\eqref{sing1}. After that, in the first and third of the three resulting diagrams, we resolve the bottom left 4-valent vertex by applying the skein relation~\eqref{sing2}. In the third step we merely simplify expressions. 

\begin{align*}
    \left[ \raisebox{-.1 in}{\includegraphics[scale=.3]{part1R3crossleft.pdf}}\right] \stackrel{\eqref{sing1}}{=}& \frac{1}{w}\left[\raisebox{-.1 in}{\includegraphics[scale=.3]{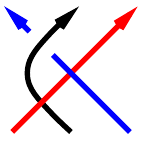}} \right]+ \frac{1}{t} \left[\raisebox{-.1 in}{\includegraphics[scale=.3]{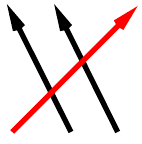}} \right]+ \frac{1}{tw} \left[\raisebox{-.1 in}{\includegraphics[scale=.3]{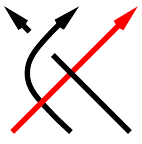}} \right]\\
    \stackrel{\eqref{sing2}}{=}& \frac{1}{w}\left( w\left[\raisebox{-.1 in}{\includegraphics[scale=.3]{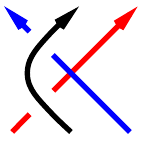}} \right]+t\left[\reflectbox{\raisebox{-.1 in}{\includegraphics[scale=.3]{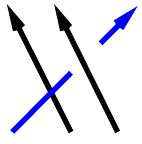}}} \right]+tw\left[\raisebox{-.1 in}{\includegraphics[scale=.3]{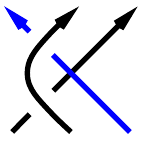}} \right]\right)+\frac{1}{t}\left[\raisebox{-.1 in}{\includegraphics[scale=.3]{part1R3cross2.pdf}} \right]\\
    &\hspace{.2in} + \frac{1}{tw} \left( w\left[\raisebox{-.1 in}{\includegraphics[scale=.3]{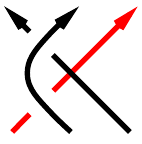}} \right]+t \left[\reflectbox{\raisebox{-.1 in}{\includegraphics[scale=.3]{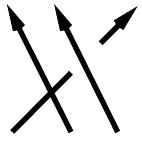}}} \right] +tw\left[\raisebox{-.1 in}{\includegraphics[scale=.3]{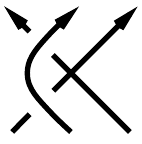}} \right]\right)\\
    =&\left[\raisebox{-.1 in}{\includegraphics[scale=.3]{part1R3cross11.pdf}} \right]+\frac{t}{w}\left[\reflectbox{\raisebox{-.1 in}{\includegraphics[scale=.3]{part1R3cross5ref.pdf}}} \right]+t \left[\raisebox{-.1 in}{\includegraphics[scale=.3]{part1R3cross10.pdf}} \right] +\frac{1}{t}\left[\raisebox{-.1 in}{\includegraphics[scale=.3]{part1R3cross2.pdf}} \right]+\frac{1}{t}\left[\raisebox{-.1 in}{\includegraphics[scale=.3]{part1R3cross8.pdf}} \right] \\
    &\hspace{.2in} +\frac{1}{w}\left[\reflectbox{\raisebox{-.1 in}{\includegraphics[scale=.3]{part1R3cross4.pdf}}} \right]+\left[\raisebox{-.1 in}{\includegraphics[scale=.3]{part1R3cross9.pdf}} \right].
    \end{align*}
 Next we apply skein relation~\eqref{poscolor} at the positive crossing in the second diagram of the resulting sum above.
Finally, in the last step below, we simplify the resulting linear combination of evaluations of colored link diagrams.
 
    \begin{align*}
      \left[ \raisebox{-.1 in}{\includegraphics[scale=.3]{part1R3crossleft.pdf}}\right] \stackrel{\eqref{poscolor}}{=}&
  \left[\raisebox{-.1 in}{\includegraphics[scale=.3]{part1R3cross11.pdf}} \right]+t\left( \frac{-1}{t} \left[\reflectbox{\raisebox{-.1 in}{\includegraphics[scale=.3]{part1R35.pdf}}} \right] -\frac{1}{tw} \left[\reflectbox{\raisebox{-.1 in}{\includegraphics[scale=.3]{part1R3cross4.pdf}}} \right] + \left[\reflectbox{\raisebox{-.1 in}{\includegraphics[scale=.3]{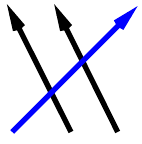}}} \right] \right)+t \left[\raisebox{-.1 in}{\includegraphics[scale=.3]{part1R3cross10.pdf}} \right] \\
    &\hspace{.2in} +\frac{1}{t}\left[\raisebox{-.1 in}{\includegraphics[scale=.3]{part1R3cross2.pdf}} \right]+\frac{1}{t}\left[\raisebox{-.1 in}{\includegraphics[scale=.3]{part1R3cross8.pdf}} \right]+\frac{1}{w}\left[\reflectbox{\raisebox{-.1 in}{\includegraphics[scale=.3]{part1R3cross4.pdf}}} \right]+\left[\raisebox{-.1 in}{\includegraphics[scale=.3]{part1R3cross9.pdf}} \right] \\
    =& \left[\raisebox{-.1 in}{\includegraphics[scale=.3]{part1R3cross11.pdf}} \right] - \left[\reflectbox{\raisebox{-.1 in}{\includegraphics[scale=.3]{part1R35.pdf}}} \right] + t \left[\reflectbox{\raisebox{-.1 in}{\includegraphics[scale=.3]{part1R3cross1ref.pdf}}} \right] + \frac{1}{t} \left[\raisebox{-.1 in}{\includegraphics[scale=.3]{part1R3cross2.pdf}} \right] + t \left[\raisebox{-.1 in}{\includegraphics[scale=.3]{part1R3cross10.pdf}} \right] \\
    &\hspace{.2in} + \frac{1}{t} \left[\raisebox{-.1 in}{\includegraphics[scale=.3]{part1R3cross8.pdf}} \right] + \left[\raisebox{-.1 in}{\includegraphics[scale=.3]{part1R3cross9.pdf}} \right]. 
      \end{align*}
      Similar steps are applied to the second diagram appearing on the left side of the identity~\eqref{R3cross}, as we show below.
    \begin{align*}
    \left[ \reflectbox{\raisebox{-.1 in}{\includegraphics[scale=.3]{part1R3crossrightref.pdf}}}\right] \stackrel{\eqref{sing2}}{=}& w\left[ \reflectbox{\raisebox{-.1 in}{\includegraphics[scale=.3]{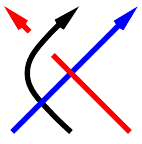}}}\right]+t\left[\reflectbox{\raisebox{-.1 in}{\includegraphics[scale=.3]{part1R3cross1ref.pdf}}} \right]+tw\left[ \reflectbox{\raisebox{-.1 in}{\includegraphics[scale=.3]{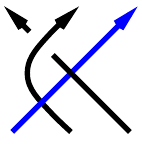}}}\right]\\
    \stackrel{\eqref{sing1}}{=}& w\left( \frac{1}{w}\left[ \reflectbox{\raisebox{-.1 in}{\includegraphics[scale=.3]{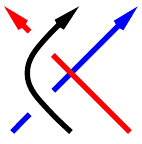}}}\right]+\frac{1}{t}\left[\raisebox{-.1 in}{\includegraphics[scale=.3]{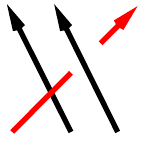}} \right]+\frac{1}{tw}\left[ \reflectbox{\raisebox{-.1 in}{\includegraphics[scale=.3]{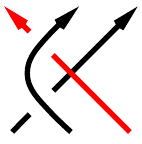}}}\right]\right)+t\left[\reflectbox{\raisebox{-.1 in}{\includegraphics[scale=.3]{part1R3cross1ref.pdf}}} \right]\\
    &\hspace{.2in} +tw\left( \frac{1}{w}\left[ \reflectbox{\raisebox{-.1 in}{\includegraphics[scale=.3]{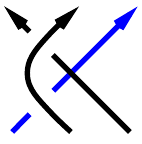}}}\right]+\frac{1}{t} \left[\raisebox{-.1 in}{\includegraphics[scale=.3]{part1R3cross4.pdf}} \right] +\frac{1}{tw}\left[ \reflectbox{\raisebox{-.1 in}{\includegraphics[scale=.3]{part1R3cross9.pdf}}}\right]\right)\\
    =&\left[ \reflectbox{\raisebox{-.1 in}{\includegraphics[scale=.3]{part1R3cross15ref.pdf}}}\right]+\frac{w}{t}\left[\raisebox{-.1 in}{\includegraphics[scale=.3]{part1R3cross3.pdf}} \right]+\frac{1}{t} \left[ \reflectbox{\raisebox{-.1 in}{\includegraphics[scale=.3]{part1R3cross16ref.pdf}}}\right] +t\left[\reflectbox{\raisebox{-.1 in}{\includegraphics[scale=.3]{part1R3cross1ref.pdf}}} \right]+t\left[ \reflectbox{\raisebox{-.1 in}{\includegraphics[scale=.3]{part1R3cross14ref.pdf}}}\right] \\
    &\hspace{.2in} +w\left[\raisebox{-.1 in}{\includegraphics[scale=.3]{part1R3cross4.pdf}} \right]+\left[ \reflectbox{\raisebox{-.1 in}{\includegraphics[scale=.3]{part1R3cross9.pdf}}}\right] \\
    \stackrel{\eqref{negcolor}}{=}&\left[ \reflectbox{\raisebox{-.1 in}{\includegraphics[scale=.3]{part1R3cross15ref.pdf}}}\right]+\frac{1}{t} \left( -t \left[ \raisebox{-.1 in}{\includegraphics[scale=.3]{part1R35.pdf}}\right] -tw \left[\raisebox{-.1 in}{\includegraphics[scale=.3]{part1R3cross4.pdf}} \right] + \left[\raisebox{-.1 in}{\includegraphics[scale=.3]{part1R3cross2.pdf}} \right] \right) +\frac{1}{t} \left[ \reflectbox{\raisebox{-.1 in}{\includegraphics[scale=.3]{part1R3cross16ref.pdf}}}\right] \\
    &\hspace{.2in} +t\left[\reflectbox{\raisebox{-.1 in}{\includegraphics[scale=.3]{part1R3cross1ref.pdf}}} \right] +t\left[ \reflectbox{\raisebox{-.1 in}{\includegraphics[scale=.3]{part1R3cross14ref.pdf}}}\right]+w\left[\raisebox{-.1 in}{\includegraphics[scale=.3]{part1R3cross4.pdf}} \right]+\left[ \reflectbox{\raisebox{-.1 in}{\includegraphics[scale=.3]{part1R3cross9.pdf}}}\right].
        \end{align*}
Therefore, 
        \begin{align*}
  \left[ \reflectbox{\raisebox{-.1 in}{\includegraphics[scale=.3]{part1R3crossrightref.pdf}}}\right]   
    =& \left[ \reflectbox{\raisebox{-.1 in}{\includegraphics[scale=.3]{part1R3cross15ref.pdf}}}\right] - \left[ \raisebox{-.1 in}{\includegraphics[scale=.3]{part1R35.pdf}}\right] + \frac{1}{t} \left[\raisebox{-.1 in}{\includegraphics[scale=.3]{part1R3cross2.pdf}} \right] + t \left[\reflectbox{\raisebox{-.1 in}{\includegraphics[scale=.3]{part1R3cross1ref.pdf}}} \right] + \frac{1}{t} \left[ \reflectbox{\raisebox{-.1 in}{\includegraphics[scale=.3]{part1R3cross16ref.pdf}}}\right] \\
    &\hspace{.2in}+ t \left[ \reflectbox{\raisebox{-.1 in}{\includegraphics[scale=.3]{part1R3cross14ref.pdf}}}\right] + \left[ \reflectbox{\raisebox{-.1 in}{\includegraphics[scale=.3]{part1R3cross9.pdf}}}\right].
    \end{align*}

Combining the two results and using that $[\, \cdot \,]$ is invariant under the move $R4$, we obtain the following equality, and thus the graphical skein relation~\eqref{R3cross} holds:
    \[
    \left[ \raisebox{-.1 in}{\includegraphics[scale=.3]{part1R3crossleft.pdf}}\right] - \left[ \reflectbox{\raisebox{-.1 in}{\includegraphics[scale=.3]{part1R3crossrightref.pdf}}}\right] = \left[ \raisebox{-.1 in}{\includegraphics[scale=.3]{part1R35.pdf}}\right] - \left[ \reflectbox{\raisebox{-.1 in}{\includegraphics[scale=.3]{part1R35.pdf}}}\right].
    \]

It remains to prove the graphical skein relation~\eqref{R3crossdown} and we start by evaluating each of the colored graph diagrams on the left side of the relation. Starting with the first diagram, we obtain the following:
\begin{align*}
    \left[ \raisebox{-.1 in}{\includegraphics[scale=.3]{part2R3crossleft.pdf}}\right] \stackrel{\eqref{sing1}}{=} &\frac{1}{w}\left[\raisebox{-.1in}{\includegraphics[scale=.3]{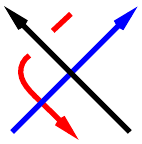}}\right] + \frac{1}{t}\left[\raisebox{-.1in}{\includegraphics[scale=.3]{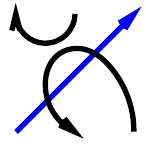}}\right] + \frac{1}{tw}\left[\raisebox{-.1in}{\includegraphics[scale=.3]{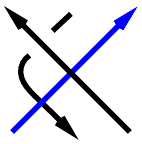}}\right] \\
    \stackrel{\eqref{sing2}, \eqref{bigonop}}{=}& \frac{1}{w}\left(w\left[\raisebox{-.1in}{\includegraphics[scale=.3]{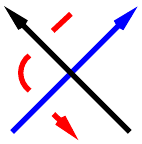}}\right] + t\left[\raisebox{-.1in}{\includegraphics[scale=.3]{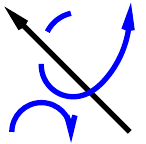}}\right] + tw\left[\raisebox{-.1in}{\includegraphics[scale=.3]{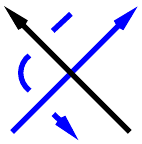}}\right]\right) \\ &\hspace{.2in} + \frac{1}{t}\left(\left[\raisebox{-.1in}{\includegraphics[scale=.3]{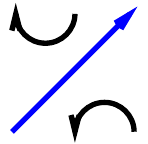}}\right] + \left(t+ t^{-1}+1\right)\left[\raisebox{-.1in}{\includegraphics[scale=.3]{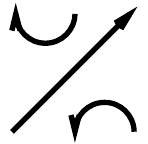}}\right]\right) \\ &\hspace{.2in} + \frac{1}{t} \left(\frac{wt^{-1}}{1-t}+\frac{w^{-1}t}{1-t^{-1}}\right)\left[\raisebox{-.1in}{\includegraphics[scale=.3]{part2R3cross9.pdf}}\right]
     \\ &\hspace{.2in} + \frac{1}{tw}\left(w\left[\raisebox{-.1in}{\includegraphics[scale=.3]{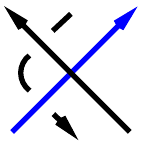}}\right] + t\left[\raisebox{-.1in}{\includegraphics[scale=.3]{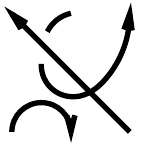}}\right] + tw\left[\raisebox{-.1in}{\includegraphics[scale=.3]{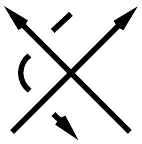}}\right]\right).
       \end{align*}
       By simplifying the result above, we have:
\begin{align*}    
    \left[ \raisebox{-.1 in}{\includegraphics[scale=.3]{part2R3crossleft.pdf}}\right] 
    =& \left[\raisebox{-.1in}{\includegraphics[scale=.3]{part2R3cross4.pdf}}\right] + t\left[\raisebox{-.1in}{\includegraphics[scale=.3]{part2R3cross6.pdf}}\right] + \frac{1}{t}\left[\raisebox{-.1in}{\includegraphics[scale=.3]{part2R3cross10.pdf}}\right] + \left[\raisebox{-.1in}{\includegraphics[scale=.3]{part2R3cross12.pdf}}\right] + \frac{t}{w}\left[\raisebox{-.1in}{\includegraphics[scale=.3]{part2R3cross5.pdf}}\right] \\ &\hspace{.2in} + \frac{1}{w}\left[\raisebox{-.1in}{\includegraphics[scale=.3]{part2R3cross11.pdf}}\right] + \frac{1}{t}\left[\raisebox{-.1in}{\includegraphics[scale=.3]{part2R3cross7.pdf}}\right] + \left(1+ t^{-2}+ t^{-1}\right)\left[\raisebox{-.1in}{\includegraphics[scale=.3]{part2R3cross8.pdf}}\right] \\ &\hspace{.2in} + \left(\frac{wt^{-2}}{1-t}+\frac{w^{-1}}{1-t^{-1}}\right)\left[\raisebox{-.1in}{\includegraphics[scale=.3]{part2R3cross9.pdf}}\right].
    \end{align*}
Continuing on with this same relation, we apply skein relation~\eqref{sing2} to the fifth and sixth diagrams on the right-hand side of the above equality, followed by applications of the invariance of $[ \,\cdot \,]$ under the Reidemeister moves $R1$ and $R2$, to obtain the following:
    \begin{align*}
    \left[ \raisebox{-.1 in}{\includegraphics[scale=.3]{part2R3crossleft.pdf}}\right] \stackrel{\eqref{sing2}, R1, R2}{=}& \left[\raisebox{-.1in}{\includegraphics[scale=.3]{part2R3cross4.pdf}}\right] + t\left[\raisebox{-.1in}{\includegraphics[scale=.3]{part2R3cross6.pdf}}\right] + \frac{1}{t}\left[\raisebox{-.1in}{\includegraphics[scale=.3]{part2R3cross10.pdf}}\right] + \left[\raisebox{-.1in}{\includegraphics[scale=.3]{part2R3cross12.pdf}}\right] \\ &\hspace{.2in} + \left(t\left[\raisebox{-.1in}{\includegraphics[scale=.3]{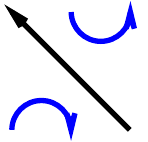}}\right] + \frac{t^2}{w}\left[\raisebox{-.1in}{\includegraphics[scale=.3]{part2R3cross9.pdf}}\right] + t^2\left[\reflectbox{\raisebox{-.1in}{\includegraphics[scale=.3]{part2R3cross8.pdf}}}\right]\right) \\ &\hspace{.2in} + \left(\left(t+1\right)\left[\reflectbox{\raisebox{-.1in}{\includegraphics[scale=.3]{part2R3cross8.pdf}}}\right] + \frac{t}{w}\left[\raisebox{-.1in}{\includegraphics[scale=.3]{part2R3cross9.pdf}}\right]\right) \\ &\hspace{.2in} + \frac{1}{t}\left[\raisebox{-.1in}{\includegraphics[scale=.3]{part2R3cross7.pdf}}\right] + \left(1+ t^{-2}+ t^{-1} \right)\left[\raisebox{-.1in}{\includegraphics[scale=.3]{part2R3cross8.pdf}}\right] \\ &\hspace{.2in} + \left(\frac{wt^{-2}}{1-t}+\frac{w^{-1}}{1-t^{-1}}\right)\left[\raisebox{-.1in}{\includegraphics[scale=.3]{part2R3cross9.pdf}}\right]. \end{align*}
Finally, by simplifying the above equality, we have:
\begin{align*}    
   \left[ \raisebox{-.1 in}{\includegraphics[scale=.3]{part2R3crossleft.pdf}}\right]  
   =& \left[\raisebox{-.1in}{\includegraphics[scale=.3]{part2R3cross4.pdf}}\right] + t\left[\raisebox{-.1in}{\includegraphics[scale=.3]{part2R3cross6.pdf}}\right] + \frac{1}{t}\left[\raisebox{-.1in}{\includegraphics[scale=.3]{part2R3cross10.pdf}}\right] + \left[\raisebox{-.1in}{\includegraphics[scale=.3]{part2R3cross12.pdf}}\right] + (t^2+ t+1) \left[\reflectbox{\raisebox{-.1in}{\includegraphics[scale=.3]{part2R3cross8.pdf}}}\right] \\ &\hspace{.2in} + \left(1+ t^{-2} + t^{-1} \right)\left[\raisebox{-.1in}{\includegraphics[scale=.3]{part2R3cross8.pdf}}\right] + \frac{1}{t}\left[\raisebox{-.1in}{\includegraphics[scale=.3]{part2R3cross7.pdf}}\right] + t\left[\raisebox{-.1in}{\includegraphics[scale=.3]{part2R3cross13.pdf}}\right] \\ &\hspace{.2in} +  \left(\frac{t}{w}+\frac{t^2}{w}+\frac{wt^{-2}}{1-t}+\frac{w^{-1}}{1-t^{-1}}\right)\left[\raisebox{-.1in}{\includegraphics[scale=.3]{part2R3cross9.pdf}}\right].
\end{align*}

By applying similar steps to the second diagram of the left-hand side of the skein relation~\eqref{R3crossdown}, we obtain the following equality:
\begin{align*}
    \left[ \reflectbox{\raisebox{-.1 in}{\includegraphics[scale=.3]{part2R3crossrightref.pdf}}}\right] =&  \left[\raisebox{-.1in}{\includegraphics[scale=.3]{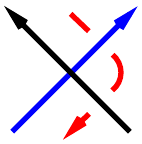}}\right] + t\left[\raisebox{-.1in}{\includegraphics[scale=.3]{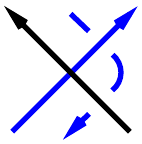}}\right] + \frac{1}{t}\left[\raisebox{-.1in}{\includegraphics[scale=.3]{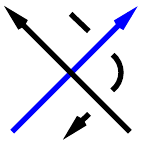}}\right] + \left[\raisebox{-.1in}{\includegraphics[scale=.3]{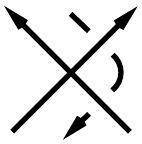}}\right] + \left(t^2+t+1\right)\left[\reflectbox{\raisebox{-.1in}{\includegraphics[scale=.3]{part2R3cross8.pdf}}}\right] \\ &\hspace{.2in} + \left(1+ t^{-2} + t^{-1}\right)\left[\raisebox{-.1in}{\includegraphics[scale=.3]{part2R3cross8.pdf}}\right] + \frac{1}{t}\left[\raisebox{-.1in}{\includegraphics[scale=.3]{part2R3cross7.pdf}}\right] + t\left[\raisebox{-.1in}{\includegraphics[scale=.3]{part2R3cross13.pdf}}\right] \\ &\hspace{.2in} +  \left(\frac{t}{w}+\frac{t^2}{w}+\frac{wt^{-2}}{1-t}+\frac{w^{-1}}{1-t^{-1}}\right)\left[\reflectbox{\raisebox{-.1in}{\includegraphics[scale=.3]{part2R3cross9.pdf}}}\right].
    \end{align*}

 Then, by using that $[\, \cdot \,]$ is invariant under the move $R4$, we obtain the desired skein relation:
   \begin{align*}
    \left[ \raisebox{-.1 in}{\includegraphics[scale=.3]{part2R3crossleft.pdf}}\right] - \left[ \reflectbox{\raisebox{-.1 in}{\includegraphics[scale=.3]{part2R3crossrightref.pdf}}}\right] =&\left( \frac{t}{w}+\frac{t^2}{w}+\frac{wt^{-2}}{1-t}+\frac{w^{-1}}{1-t^{-1}} \right) \left( \left[\raisebox{-.1in}{\includegraphics[scale=.3]{part2R3cross9.pdf}}\right] - \left[\reflectbox{\raisebox{-.1in}{\includegraphics[scale=.3]{part2R3cross9.pdf}}}\right] \right) \\
   =&  \left(\frac{wt^{-2}}{1-t}+\frac{w^{-1}t^2}{1-t^{-1}}\right) \left( \left[\raisebox{-.1in}{\includegraphics[scale=.3]{part2R3cross9.pdf}}\right] - \left[\reflectbox{\raisebox{-.1in}{\includegraphics[scale=.3]{part2R3cross9.pdf}}}\right] \right).
\end{align*}
Hence the graphical skein relation~\eqref{R3crossdown} holds.
\end{proof}

\begin{remark}
We note that if the two strands in the left-hand side diagrams of the skein relations~\eqref{sing1} and~\eqref{sing2} are colored with the same color, then the following holds:
\begin{align*} 
  \left[ \raisebox{-.11 in}{\includegraphics[scale=.3]{cross.pdf}}\right]=  \frac{t+1}{tw}  \left[ \raisebox{-.11 in}{\reflectbox{\includegraphics[scale=.3]{negcross.pdf}}} \right]  + \frac{1}{t}\left[ \raisebox{-.11 in}{\includegraphics[scale=.3]{uncross.pdf}}\right], \,\,\,
   \left[ \raisebox{-.11 in}{\includegraphics[scale=.3]{cross.pdf}}\right] = w(t +1) \left[ \raisebox{-.11 in}{\includegraphics[scale=.3]{negcross.pdf}} \right]  + t \left[ \raisebox{-.11 in}{\includegraphics[scale=.3]{uncross.pdf}}\right]. 
\end{align*}
Solving for  $\left[ \raisebox{-.11in}{\reflectbox{\includegraphics[scale=.3]{negcross.pdf}}} \right]$ and $\left[ \raisebox{-.11in}{\includegraphics[scale=.3]{negcross.pdf}} \right]$ in the above relations and substituting them in~\eqref{sing1} and~\eqref{sing2}, respectively, we obtain the following skein relations:
\begin{align}
    \left[ \reflectbox{\raisebox{-.11 in}{\includegraphics[scale=0.3]{poscrosscolor_reflect.pdf}}} \right] =& \frac{-w}{t+1} \left[ \raisebox{-.11 in}{\includegraphics[scale=0.3]{uncross.pdf}} \right] - \frac{w}{t+1} \left[ \raisebox{-.11 in}{\includegraphics[scale=0.3]{cross.pdf}} \right] + w \left[ \raisebox{-.11 in}{\includegraphics[scale=0.3]{crosscolor.pdf}} \right] \label{graphpos}\\
    \left[ {\raisebox{-.11 in}{\includegraphics[scale=0.3]{negcrosscolor.pdf}}} \right] =& \frac{-t}{w(t+1)} \left[ \raisebox{-.11 in}{\includegraphics[scale=0.3]{uncross.pdf}} \right] - \frac{t}{w(t+1)} \left[ \raisebox{-.11 in}{\includegraphics[scale=0.3]{cross.pdf}} \right] + \frac{1}{w} \left[ \raisebox{-.11 in}{\includegraphics[scale=0.3]{crosscolor.pdf}} \right].\label{graphneg}
\end{align}
\end{remark}

The skein relations~\eqref{graphpos} and~\eqref{graphneg} will play a major role in the following section.


\section{A state-sum model for the Aicardi-Juyumaya invariant for colored links} \label{state-sum model}

The graphical skein relations proved in Theorem~\ref{GraphCalculus} provide a recursive way to evaluate oriented, colored, planar graphs with 4-valent vertices, where a 4-valent vertex is diagrammatically represented as a singular crossing. We denote the evaluation of such planar graph $G$ by $[G]$. Specifically, the graphical skein relations of Theorem~\ref{GraphCalculus} allow the evaluation $[G]$ to be written as a $\mathbb{Q}(x, t, w)$-formal linear combination of evaluations of colored 4-valent planar graphs with fewer vertices. This also allows us to compute the invariant $[\tilde{D}]$ of a colored singular link without relying on the existence of the invariant $F$ for colored classical links. In particular, graphical skein relations of Theorem~\ref{GraphCalculus} provide a state-sum model for the Aicardi-Juyumaya invariant $F$ for colored (classical) links, where the states associated to a diagram of a colored link are oriented, colored, $4$-valent planar graphs, $G$.

Given any colored link diagram $D$, we can resolve each of its crossings using the skein relations~\eqref{graphpos} and~\eqref{graphneg}. After having performed this step at each of the crossings of diagram $D$, we obtain $\left[ D \right]$ written as a formal linear combination of evaluations of its states $G$. Each state $G$ is then evaluated using the graphical relations provided by Theorem~\ref{GraphCalculus}.

\begin{remark}
We remark that this type of approach is not new when working with polynomial invariants in combinatorial knot theory. Kauffman and Vogel~\cite{KV} constructed an invariant for rigid-vertex embeddings of unoriented 4-valent graphs in $\mathbb{R}^3$, which extends the Dubrovnik polynomial to embeddings of graphs. The Dubrovnik polynomial is a version of the 2-variable Kauffman polynomial~\cite{Ka1990} for unoriented links. In~\cite{KV}, a graphical calculus for evaluations of planar 4-valent graphs (not colored) was provided. Then Carpentier~\cite{C} explained how that graphical calculus can be used to provide a state-sum formula for the Dubrovnik polynomial for unoriented links and its extension to rigid-vertex embeddings of 4-valent graphs.
\end{remark}

\begin{lemma} \label{GraphTransform}
If a connected colored 4-valent planar graph contains at least one vertex and does not contain a loop $\raisebox{-.08 in}{\includegraphics[scale=.25]{loop.pdf}} $ or a bigon $\reflectbox{\raisebox{-.08 in}{\includegraphics[scale=.25]{bigoncrosscolor.pdf}}}, \reflectbox{\raisebox{-.08 in}{\includegraphics[scale=.27]{4valbigoncolordown.pdf}}}$, then using a finite sequence of moves of types $\raisebox{-.09 in}{\includegraphics[scale=.27]{part1R3crossleft.pdf}} \leftrightarrow  \reflectbox{\raisebox{-.09 in}{\includegraphics[scale=.25]{part1R3crossrightref.pdf}}}$ and $\raisebox{-.09 in}{\includegraphics[scale=.25]{part2R3crossleft.pdf}} \leftrightarrow \reflectbox{\raisebox{-.09 in}{\includegraphics[scale=.25]{part2R3crossrightref.pdf}}}$, it is possible to transform the graph into a colored 4-valent planar graph that contains a bigon.
\end{lemma}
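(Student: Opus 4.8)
The plan is to reduce the statement to a purely combinatorial fact about faces and then run an extremal argument over the (finite) set of graphs reachable from the given one by the two listed moves. Concretely, I would first show that a connected colored $4$-valent plane graph with at least one vertex and no loop and no bigon necessarily has a triangular face, and then show that such a triangular face can always be ``pushed'' by a move of type~\eqref{R3cross} or~\eqref{R3crossdown} so as to strictly decrease a suitable complexity; a complexity-minimizing representative in the move-orbit must therefore already contain a bigon.

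For the existence of a triangular face I would use an Euler-characteristic count. Writing $V, E, F$ for the numbers of vertices, edges and faces of the graph regarded as a connected graph on the sphere, $4$-valence gives $2E = 4V$, hence $E = 2V$, and Euler's formula $V - E + F = 2$ gives $F = V + 2$. Counting edge--face incidences yields $\sum_{f}\deg(f) = 2E = 4V$. If every face had at least four sides we would get $4V = \sum_{f}\deg(f) \ge 4F = 4(V+2) = 4V + 8$, a contradiction; hence some face has at most three sides. A one-sided face is a loop and a two-sided face is a bigon, both excluded by hypothesis (and since the graph is $4$-valent with no loop we have $V \ge 2$, so it genuinely has edges and faces), so the graph has a triangular face $T$.

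For the extremal step, note first that the orbit of the graph under the two allowed moves is finite, since these moves neither create nor destroy vertices and there are only finitely many colored, oriented, $4$-valent plane graphs with a fixed number of vertices up to homeomorphism of the sphere. Choose a representative $G_{0}$ of this orbit minimizing a suitable complexity (for instance $\sum_{f}\deg(f)^{2}$, or the multiset of face degrees compared lexicographically from the largest). If $G_{0}$ contains a bigon, we are done, and the graph we started with has been carried to $G_{0}$ by a finite sequence of the allowed moves. Otherwise $G_{0}$ is again loop- and bigon-free, so by the previous paragraph it has a triangular face $T$; each move of type~\eqref{R3cross} or~\eqref{R3crossdown}, up to rotation and reflection, is a local Reidemeister-III-type modification that slides a strand of the graph across a nearby vertex, and one checks that at $T$ such a slide --- pushing the strand that forms one side of $T$ across one of its vertices --- is available and changes the sizes of the faces surrounding $T$ by $\pm 1$ in a way that, for an appropriate choice of the side pushed, strictly lowers the complexity. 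This contradicts the minimality of $G_{0}$, so $G_{0}$ must in fact contain a bigon.

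The heart of the argument --- and the step I expect to be the main obstacle --- is this last local analysis. One must (a) enumerate the ways the four edges at each vertex of $T$ can be paired into through-strands, together with the compatible orientations, and verify in each case that a move of type~\eqref{R3cross} or~\eqref{R3crossdown} (or a rotation or reflection of one of these) genuinely applies at $T$; (b) pin down a complexity that is guaranteed to drop under the appropriate such move, ruling out cycling among graphs of equal complexity; and (c) dispose of the degenerate triangles --- a side traversed twice along $\partial T$, or two vertices of $T$ coinciding, i.e.\ a triangular face whose boundary runs partly along a single strand --- by showing either that they cannot occur under the no-loop/no-bigon hypothesis or that they already exhibit the desired bigon. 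This is precisely where the figures accompanying Theorem~\ref{GraphCalculus} must be consulted carefully.
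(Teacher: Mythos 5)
Your Euler-characteristic step is correct and essentially complete: $2E=4V$ and $F=V+2$ force a face of degree at most three, and the degenerate triangles you defer to item~(c) cannot in fact occur, since a connected graph with all vertex degrees even has no bridges (so no edge is repeated on a face boundary) and a repeated vertex on a closed boundary walk of length three would force a loop edge, which is excluded. The genuine gap is in the extremal step, which is where the entire content of the lemma lives and which you leave as ``one checks.'' A move of type~\eqref{R3cross} or~\eqref{R3crossdown} does not destroy the triangle it is applied to; it slides it across the opposite vertex, and a count of face--corner incidences shows that pushing the side of the triangle facing a face $A$ across the vertex incident to the opposite corner face $X$ changes exactly two face degrees, namely $\deg(A)\mapsto\deg(A)-1$ and $\deg(X)\mapsto\deg(X)+1$. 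Both of your proposed complexities (the sum of squared face degrees, or the descending-lexicographic order on the face-degree multiset) therefore decrease only when $\deg(A)\ge\deg(X)+2$ for some choice of side, and nothing in your argument rules out a bigon-free graph in which every triangle fails this for all three of its sides; in that case your minimal representative admits no decreasing move and no bigon, and the proof collapses. You also do not carry out item~(a): the two listed moves are stated for particular orientation and coloring patterns, and you must verify that every orientation pattern a triangular face can carry (in particular the cyclically oriented one) is matched, up to rotation and reflection, by one of them.

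For comparison, the paper does not attempt this analysis at all: its proof of the lemma is a citation to \cite[Lemma 2]{C}, and the argument there (following \cite{KV}) is not a global monovariant argument but an innermost one. One considers the strands of the $4$-valent graph, observes that whenever there is at least one vertex some strand bounds, together with at most one other strand, a disk (a ``lens''), chooses such a disk whose interior contains the fewest vertices, and shows that triangle moves strictly reduce that count until the disk is emptied, at which point its boundary is a bigon (a monogon being excluded by the no-loop hypothesis). To complete your write-up you should either adopt that induction or actually exhibit a monovariant together with a proof of the required local inequality; as it stands the proposal is a correct reduction to the hard step plus an unsubstantiated assertion that the hard step works.
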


\begin{proof}
This statement can be proved in a same way as~\cite[Lemma 2]{C}.
\end{proof}

\begin{theorem}
There is a unique rational expression $[ G ] \in \mathbb{Q}(x, w, t)$ associated to a colored 4-valent planar graph $G$, such that $[ \,\cdot \, ]$ satisfies the graphical relations of Theorem~\ref{GraphCalculus}, as well as the following equalities:
\begin{align} \label{RelTrivialLinks}
 \left[ \raisebox{-.07 in}{\includegraphics[scale=0.2]{unknot.pdf}} \right] = 1, \left[  O_n^n   \right] = \left(\frac{1}{wx} \right)^{n-1}, \text{and} \, \left[  O_n^c  \tilde \cup  \raisebox{-.05 in}{\includegraphics[scale=0.2]{unknot.pdf}}   \right] = \frac{tw^2-1}{(1-t)w}  \left[  O_n^c   \right], 
 \end{align}
where $O_n^c$ is the standard diagram of the unlink with $n$ components and $c$ colors ($c \leq n)$, and where the circle in the disjoint union $O_n^c \tilde \cup  \raisebox{-.05 in}{\includegraphics[scale=0.2]{unknot.pdf}}$ has the same color as (at least) one of the unknotted circles in $O_n^c$.
\end{theorem}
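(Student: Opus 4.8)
The plan is to split the statement into its existence and uniqueness halves and to obtain each from material already established: existence comes essentially for free from the colored singular link invariant built in Section~\ref{ExtensionSingLinks}, while uniqueness comes from the reduction procedure made available by Lemma~\ref{GraphTransform} together with the loop and bigon relations of Theorem~\ref{GraphCalculus}.

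For existence, I would observe that a colored $4$-valent planar graph $G$ is nothing but a diagram of a colored singular link all of whose crossings are singular, so the invariant $[\,\cdot\,]$ of colored singular links constructed in Section~\ref{ExtensionSingLinks} already assigns to $G$ a value in $\mathbb{Q}(x,w,t)$ depending only on $G$ as a planar graph. By Theorem~\ref{GraphCalculus} this assignment satisfies every graphical relation in the statement, and it satisfies the equalities~\eqref{RelTrivialLinks} as well: the first is condition (I); the second follows by applying condition (II) a total of $n-1$ times to $O_n^n$; and the third is exactly identity~\eqref{eq:tied_union} with $L = O_n^c$. So a function with all the required properties exists.

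For uniqueness, I would take any function $[\,\cdot\,]'$ on colored $4$-valent planar graphs satisfying the graphical relations of Theorem~\ref{GraphCalculus} and the equalities~\eqref{RelTrivialLinks}, and prove by strong induction on the number $v(G)$ of $4$-valent vertices that $[G]'$ is forced. The base case $v(G)=0$ is handled by~\eqref{RelTrivialLinks}: then $G$ is a disjoint union of colored circles, i.e.\ an unlink $O_n^c$, whose value is pinned down by using the third relation of~\eqref{RelTrivialLinks} to merge same-colored circles and then the second relation. For $v(G)\ge 1$ I would argue: if $G$ contains a loop, relation~\eqref{singloop} rewrites $[G]'$ using a graph with one fewer vertex; if $G$ contains a bigon, relation~\eqref{bigon} or~\eqref{bigonop} (according to the orientations along the bigon) rewrites $[G]'$ as a $\mathbb{Q}(x,w,t)$-linear combination of evaluations of graphs with strictly fewer vertices; and if $G$ has neither, I would pick a connected component $G_0$ of $G$ carrying a vertex and invoke Lemma~\ref{GraphTransform} to produce, through finitely many moves of the two types appearing in~\eqref{R3cross} and~\eqref{R3crossdown}, a graph that does contain a bigon, each such move being realized by relation~\eqref{R3cross} or~\eqref{R3crossdown} and thereby rewriting $[G]'$ in terms of graphs that either contain a bigon or have fewer vertices. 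In every case the inductive hypothesis applies to all graphs that appear, so $[G]'$ is determined, and hence $[\,\cdot\,]' = [\,\cdot\,]$.

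The hard part will be the last case of the induction. The $R3$-type moves of~\eqref{R3cross} and~\eqref{R3crossdown} do not change the number of $4$-valent vertices, so they cannot by themselves drive the induction forward; what rescues the argument is Lemma~\ref{GraphTransform} (whose proof mirrors~\cite[Lemma 2]{C}), guaranteeing that after a bounded number of such moves a bigon must appear, after which the bigon relations strictly decrease the vertex count. I would also need the bookkeeping check that each auxiliary graph on the right-hand sides of~\eqref{singloop},~\eqref{bigon},~\eqref{bigonop},~\eqref{R3cross} and~\eqref{R3crossdown} has either strictly fewer vertices than $G$ or already contains a bigon, so that the induction is genuinely well-founded; this is finite inspection of the diagrams listed in Theorem~\ref{GraphCalculus}. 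Finally, I note that one could avoid using the Section~\ref{ExtensionSingLinks} invariant for the existence half, at the price of proving directly that the reduction is confluent --- independent of which loop or bigon is removed and of which sequence of $R3$-type moves is chosen --- by a Kauffman--Vogel/Carpentier-type diamond argument; that confluence, the genuine technical core of a self-contained treatment, is here supplied gratis by the mere existence of a function obeying all the relations.
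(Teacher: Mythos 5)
Your proof is correct and follows essentially the same route as the paper: uniqueness by induction on the number of $4$-valent vertices, using the loop and bigon relations to reduce the vertex count, and Lemma~\ref{GraphTransform} together with the telescoping supplied by relations~\eqref{R3cross} and~\eqref{R3crossdown} when no loop or bigon is present. The only difference is that you make the existence half explicit (via the colored singular link invariant of Section~\ref{ExtensionSingLinks}), which the paper leaves implicit.
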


\begin{proof}
The proof is by induction on the number of vertices in a colored 4-valent planar graph and is similar to~\cite[Theorem 3]{C}.

Let $[ \,\cdot \,]$ and $[ \,\cdot \,]^*$ be two rational expressions that satisfy the graphical relations given in Theorem~\ref{GraphCalculus} and the additional relations~\eqref{RelTrivialLinks}. 
If $G$ has no vertices, then since $G$ is planar, it is a colored unlink containing $n$ unknotted circles. Then, by relations~\eqref{RelTrivialLinks}, $[G] = [G]^*$.

Suppose that $[ \,\cdot \,]$ and $[ \,\cdot \,]^*$ yield equal values when evaluated on the  same colored 4-valent planar graph with at most $k$ vertices. Let $G$ be a colored 4-valent planar graph with $k+1$ vertices. If $G$ contains a loop $\raisebox{-.08 in}{\includegraphics[scale=.25]{loop.pdf}} $ or a local configuration of type $\reflectbox{\raisebox{-.08 in}{\includegraphics[scale=.25]{bigoncrosscolor.pdf}}}$ or $\reflectbox{\raisebox{-.08 in}{\includegraphics[scale=.27]{4valbigoncolordown.pdf}}}$, then by applying the skein relation~\eqref{singloop}, \eqref{bigon} or~\eqref{bigonop}, respectively, the evaluations $[G]$ and $[G]^*$ are written in terms of evaluations of colored 4-valent planar graphs with fewer vertices, and therefore, by the induction hypothesis,  $[G] = [G]^*$.

If $G$ does not contain a loop ($\raisebox{-.08 in}{\includegraphics[scale=.25]{loop.pdf}} $) or a bigon ($\reflectbox{\raisebox{-.08 in}{\includegraphics[scale=.25]{bigoncrosscolor.pdf}}}$ or $\reflectbox{\raisebox{-.08 in}{\includegraphics[scale=.27]{4valbigoncolordown.pdf}}}$), then by Lemma~\ref{GraphTransform}, there exists a finite sequence of colored 4-valent planar graphs 
\[G =  G_0 \to G_1 \to G_2 \to \dots \to G_s,\]
 where for each $0 \leq i \leq s-1$, $G_{i+1}$ is obtained from $G_i$ by applying the move $\raisebox{-.09 in}{\includegraphics[scale=.27]{part1R3crossleft.pdf}} \leftrightarrow  \reflectbox{\raisebox{-.09 in}{\includegraphics[scale=.25]{part1R3crossrightref.pdf}}}$ or $\raisebox{-.09 in}{\includegraphics[scale=.25]{part2R3crossleft.pdf}} \leftrightarrow \reflectbox{\raisebox{-.09 in}{\includegraphics[scale=.25]{part2R3crossrightref.pdf}}}$, and where $G_s$ contains a bigon of type $\reflectbox{\raisebox{-.08 in}{\includegraphics[scale=.25]{bigoncrosscolor.pdf}}}$ or $\reflectbox{\raisebox{-.08 in}{\includegraphics[scale=.27]{4valbigoncolordown.pdf}}}$. Then, by applying the skein relations~\eqref{R3cross} and~\eqref{R3crossdown} followed by the inductive hypothesis, we have that
\[ [G_i] -[G_{i+1}] =  [G_i]^* -[G_{i+1}]^*, \,\, \text{for all} \,\, 0 \leq i \leq s-1.  \]
Hence, $[G] - [G_s] = [G]^* - [G_s]^*$. But $G_s$ contains a bigon, and by the previous step in the proof, $[G_s] = [G_s]^*$. It follows that $[G] = [G]^*$.

By the principal of mathematical induction, $[ \,\cdot \,]$ and $[ \,\cdot \,]^*$ yield the same quantity when evaluated from a graph $G$ with any number of 4-valent vertices, and thus $[ \,\cdot \,] = [ \,\cdot \,]^*$.
  \end{proof} 

\begin{theorem}
Let $D_1$ and $D_2$ be diagrams representing a colored link. Resolve all crossings in $D_1$ and $D_2$ using the skein relations~\eqref{graphpos} and~\eqref{graphneg}, to write $[D_1]$ and $[D_2]$ as $\mathbb{Q}(x, w, t)$-linear combinations of evaluations of their states. Then evaluate the corresponding states using the graphical skein relations~\eqref{singloop}--\eqref{R3crossdown}, together with identities~ \eqref{RelTrivialLinks}.
Then $[D_1] = [D_2]$; that is, the rational expression $[ \, \cdot \, ] \in \mathbb{Q}(x, w, t)$ is an invariant of colored links. Moreover, $[\, \cdot \, ]$ is equal to Aicardi-Juyumaya invariant $F$.
\end{theorem}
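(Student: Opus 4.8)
The plan is to regard $[\,\cdot\,]$ on colored link diagrams as the extension of the graph evaluation supplied by the preceding theorem, and to establish, in this order, that it is well defined, invariant under the Reidemeister moves, and equal to $F$. I would first verify well-definedness. Resolving a classical crossing of $D$ via \eqref{graphpos} or \eqref{graphneg} (at a crossing between strands of distinct colors, via the evident variant of \eqref{sing1}--\eqref{sing2} solved for the crossing) replaces that crossing, inside a small disk, by a smoothing or a $4$-valent vertex. Resolutions at different crossings thus occur in disjoint disks and commute, and each resolution strictly decreases the number of classical crossings; hence the procedure terminates in a canonical $\mathbb{Q}(x,w,t)$-linear combination $\sum_G c_G\,G$ of colored $4$-valent planar graphs, and $[D]:=\sum_G c_G[G]$ is an unambiguous element of $\mathbb{Q}(x,w,t)$ attached to $D$, since $[\,\cdot\,]$ on graphs is well defined by the preceding theorem.

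Next I would prove invariance under $R1$, $R2$, $R3$. For a given move, resolve all classical crossings occurring in the local tangle on both sides; by the construction of $[\,\cdot\,]$ this reduces the equality $[D_1]=[D_2]$ to a finite identity among evaluations of colored $4$-valent planar graphs that agree outside a disk. Those identities are precisely the graphical skein relations of Theorem~\ref{GraphCalculus}: $R1$ collapses to \eqref{singloop}, the $R2$ cases to \eqref{bigon} and \eqref{bigonop}, and the $R3$ cases to \eqref{R3cross} and \eqref{R3crossdown} (the remaining oriented and colored variants being handled by combining these with \eqref{singloop}, \eqref{bigon}, \eqref{bigonop}, exactly as in the proof of Theorem~\ref{GraphCalculus}). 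Since the graph evaluation satisfies all relations of Theorem~\ref{GraphCalculus}, each such identity holds, and therefore $[D]$ depends only on the colored link type: $[D_1]=[D_2]$.

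It then remains to identify $[\,\cdot\,]$ with $F$. Knowing $[\,\cdot\,]$ to be a colored-link invariant, by the uniqueness of $F$ it suffices to verify Aicardi's defining conditions (I), (II), (III). Condition (I) is the first identity of \eqref{RelTrivialLinks}. Condition (II) follows from the second identity of \eqref{RelTrivialLinks}: resolving the crossings of $D$ writes the disjoint union of $D$ with a new-colored unknotted circle as a sum of colored $4$-valent graphs, each carrying that circle as a split component, which the graphical relations then carry down to unlinks, where the factor $\tfrac{1}{wx}$ is produced. For the skein relation (III): substituting \eqref{graphpos} and \eqref{graphneg} into $\tfrac1w[\,\cdot\,]-w[\,\cdot\,]$ for a positive versus a negative crossing rewrites both sides of (III) in terms of a smoothing and a monochromatic $4$-valent vertex, and the single extra identity expressing that vertex in terms of a monochromatic positive crossing and a smoothing (obtained by resolving that crossing) makes the two sides agree; relation (IV) is then the monochromatic specialization. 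Hence $[\,\cdot\,]=F$. A shorter alternative route is to observe that the graph evaluation of the preceding theorem coincides, by its uniqueness, with the restriction to $4$-valent planar graphs of the singular-link invariant of Section~\ref{ExtensionSingLinks}, that \eqref{graphpos}--\eqref{graphneg} and \eqref{RelTrivialLinks} are all satisfied by the latter, and hence that each step of the state-sum procedure preserves the value $F(D)$, giving $[D]=F(D)$ directly.

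The main obstacle is the $R3$ invariance — equivalently, the passage from the graphical calculus back to topological invariance. As in Carpentier's treatment of the Kauffman--Dubrovnik polynomial~\cite{C}, one cannot merely resolve the three crossings of an $R3$ triangle and read off \eqref{R3cross} or \eqref{R3crossdown}: one must first invoke Lemma~\ref{GraphTransform} to bring a bigon into place, and then track all oriented and colored versions of the move, each specializing the skein coefficients differently. Confirming that the relations of Theorem~\ref{GraphCalculus} are closed under these reductions — that no further graphical relation is forced — is the delicate point, and it is exactly what the uniqueness assertion of the preceding theorem guarantees, those relations having been derived from the Reidemeister invariance of the already-constructed invariant.
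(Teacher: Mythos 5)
Your proposal is correct and follows essentially the same strategy as the paper, whose proof consists of the single remark that invariance under $R1$, $R2$, $R3$ is checked by direct computations reversing those in Theorem~\ref{GraphCalculus} — exactly your reduction of each local Reidemeister tangle, after resolving its crossings via~\eqref{graphpos} and~\eqref{graphneg}, to the graphical relations~\eqref{singloop}--\eqref{R3crossdown} and the identities~\eqref{RelTrivialLinks}. You in fact go further than the paper's proof by also arguing well-definedness of the state sum and the identification $[\,\cdot\,]=F$ (via Aicardi's axioms (I)--(III), or via the uniqueness of the graph evaluation), points the paper leaves implicit.
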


\begin{proof}
The proof that $[D_1] = [D_2]$ when the two diagrams differ by a Reidemeister move $R1$, $R2$ or $R3$ is
by direct computations, which in some sense are the reverse of the proofs of our graphical skein relations in Theorem~\ref{GraphCalculus}.
\end{proof}


\end{document}